\newtheorem{thm}{Theorem}[section]
\newtheorem{lem}[thm]{Lemma}
\newtheorem{prop}[thm]{Proposition}
\newtheorem{thmintro}{Theorem}
\theoremstyle{definition}
\newtheorem{cond}[thmintro]{Conditions}
\renewcommand{\thethmintro}{\Alph{thmintro}}
\newcommand{\N}{\mathbb N}
\newcommand{\Z}{\mathbb Z}
\newcommand{\R}{\mathbb R}
\newcommand{\C}{\mathbb C}
\newcommand{\mf}{\mathfrak}
\newcommand{\mc}{\mathcal}
\newcommand{\mr}{\mathrm}
\newcommand{\enuma}[1]{\begin{enumerate}[\textup{(}a\textup{)}] {#1} \end{enumerate}}
\newcommand{\End}{\mathrm{End}}
\newcommand{\matje}[4]{\left(\begin{smallmatrix} #1 & #2 \\ 
#3 & #4 \end{smallmatrix}\right)}
\begin{document}

\title{A comparison of Hochschild homology in algebraic and smooth settings}

\author{David Kazhdan}
\address{Einstein Institute of Mathematics\\
The Hebrew University of Jerusalem\\
Givat Ram, Jerusalem, 9190401, Israel}
\email{kazhdan@math.huji.ac.il} 
\author{Maarten Solleveld}
\address{Institute for Mathematics, Astrophysics and Particle Physics\\
Radboud Universiteit, Heyendaalseweg 135\\
6525AJ Nijmegen, the Netherlands}
\email{m.solleveld@science.ru.nl} 
\date{\today}
\subjclass[2010]{Primary 13D07, 13J10; Secondary 16E40}

\maketitle

\begin{abstract}
Consider a complex affine variety $\tilde V$ and a real analytic Zariski-dense
submanifold $V$ of $\tilde V$. We compare modules over the ring $\mc O (\tilde V)$ of
regular functions on $\tilde V$ with modules over the ring $C^\infty (V)$ of smooth complex
valued functions on~$V$. 

Under a mild condition on the tangent spaces, we prove that $C^\infty (V)$ is flat as a module 
over $\mc O (\tilde V)$. From this we deduce a comparison theorem for the Hochschild homology of 
finite type algebras over $\mc O (\tilde V)$ and the Hochschild homology of similar algebras over 
$C^\infty (V)$. 

We also establish versions of these results for functions on $\tilde V$ (resp. $V$) that are 
invariant under the action of a finite group $G$. As an auxiliary result, we show that 
$C^\infty (V)$ has finite rank as module over $C^\infty (V)^G$.
\end{abstract}

\tableofcontents

\section*{Introduction}

\renewcommand{\thethmintro}{\Alph{thmintro}}

Let $\tilde V$ be a complex affine variety and let $V \subset \tilde V$ be a smooth submanifold.
The general goal of this paper is to compare modules over the algebra of regular functions
$\mc O (\tilde V)$ with modules over the algebra of (complex-valued) smooth functions $C^\infty (V)$.
On the algebraic side $\tilde V$ may be singular. On the smooth side we allow minor singularities
via a smooth action of a finite group $G$, so that we actually consider smooth functions on an
orbifold $V / G$. The precise conditions needed for our of results are:

\begin{cond}\label{cond}
\begin{itemize}
\item[(i)] $V$ is a real analytic Zariski-dense submanifold of $\tilde V$,
\item[(ii)] the action of $G$ on $V$ extends to an action of $G$ on $\tilde V$, 
by algebraic automorphisms,
\item[(iii)] for all $v \in V$, $T_v (\tilde V) = T_v (V) \otimes_\R \C$.
\end{itemize}
\end{cond}
\noindent Typical examples come from real forms of $\tilde V$ (but maybe not all real forms qualify). 
Sometimes (iii) can be replaced by
\begin{itemize}
\item[(iii')] $G$ acts freely on $V$ (e.g. $G =1$) and for each $v \in V$, the real vector 
space $T_v (V)$ spans the complex vector space $T_v (\tilde V)$.
\end{itemize}
The assumptions (i) and (ii) guarantee that $\mc O (\tilde V)$ embeds $G$-equivariantly in 
$C^\infty (V)$. Either of (iii) and (iii') entails that at every point of $V$ the formal completion 
of $\mc O (\tilde V)$ is a subalgebra of the formal completion of $C^\infty (V)$. Under condition
(iii'), $V / G$ can be endowed with the structure of a smooth manifold.

\begin{thmintro}\label{thm:B} 
\textup{(see Theorem \ref{thm:1.1})} \\
Assume that (i), (ii) and (iii) or (iii') hold. Then $C^\infty (V)^G$ is flat over $\mc O (\tilde V)^G$.
\end{thmintro}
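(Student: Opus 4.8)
The plan is to deduce the global flatness statement from a purely local statement at the points of $V$, by means of a soft‑sheaf argument, and then to settle that local statement by reduction to classical flatness theorems for rings of smooth germs.

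First I would do the case $G=1$. Since $A:=\mc O(\tilde V)$ is a finitely generated, hence Noetherian, $\C$-algebra, it suffices to show $\mathrm{Tor}^A_1(A/I,C^\infty(V))=0$ for every ideal $I\subseteq A$. Restriction of regular functions makes the sheaf $C^\infty_V$ of $\C$-valued smooth functions on $V$ a sheaf of $A$-modules; choosing a resolution $F_\bullet\to A/I$ by finitely generated free $A$-modules, the complex $\mc F_\bullet:=F_\bullet\otimes_A C^\infty_V$ consists of free $C^\infty_V$-modules and satisfies $\Gamma(V,\mc F_\bullet)=F_\bullet\otimes_A C^\infty(V)$. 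As every sheaf of $C^\infty_V$-modules on the paracompact manifold $V$ is soft — hence $\Gamma(V,-)$-acyclic, and $\Gamma(V,-)$ preserves exactness of short exact sequences with such a kernel — the homology of $\Gamma(V,\mc F_\bullet)$ in positive degrees vanishes once the cohomology \emph{sheaves} of $\mc F_\bullet$ vanish in positive degrees. The stalk of the $j$-th such sheaf at $v\in V$ is $\mathrm{Tor}^{\mc O_{\tilde V,v}}_j\big((A/I)_{\mf m_v},C^\infty_{V,v}\big)$, where $C^\infty_{V,v}$ is the ring of germs at $v$, an $\mc O_{\tilde V,v}$-module because regular functions nonvanishing at $v$ become units in $C^\infty_{V,v}$. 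So everything reduces to the claim that $C^\infty_{V,v}$ is flat over $\mc O_{\tilde V,v}$ for every $v\in V$.

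To prove that, I would factor $\mc O_{\tilde V,v}\hookrightarrow C^\infty_{V,v}$ through the Noetherian local ring $\mc O^{\mr{an}}_{V,v}\cong\C\{x_1,\dots,x_n\}$ of complex‑valued real‑analytic germs at $v$ ($n=\dim_\R V$), which is available since $V$ is real analytic. The first arrow is a local homomorphism of Noetherian local rings whose completion is the inclusion $\widehat{\mc O}_{\tilde V,v}\hookrightarrow\C[[x_1,\dots,x_n]]$; by (iii) or (iii') this is injective and split injective on cotangent spaces, hence, after a formal coordinate change, the standard inclusion $\C[[z_1,\dots,z_d]]\hookrightarrow\C[[z_1,\dots,z_d]][[x_{d+1},\dots,x_n]]$ with $d=\dim_\C\tilde V$ — in particular it is flat and $\tilde V$ is smooth at $v$. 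A short argument using faithful flatness of completion then shows that a local homomorphism of Noetherian local rings that is flat on completions is flat; so $\mc O_{\tilde V,v}\to\mc O^{\mr{an}}_{V,v}$ is flat, and $\mc O^{\mr{an}}_{V,v}\to C^\infty_{V,v}$ is flat by Malgrange's theorem that the ring of $C^\infty$-germs is flat over the ring of real‑analytic germs. Composing gives the local claim, hence Theorem~\ref{thm:1.1} for $G=1$.

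For general $G$ the same soft‑sheaf argument works over the Noetherian ring $A^G=\mc O(\tilde V)^G=\mc O(\tilde V/\!\!/G)$, with $V$ replaced by $W:=V/G$ and $C^\infty_V$ by the sheaf $\mc C^\infty_W:=(\pi_*C^\infty_V)^G$ of smooth functions on the orbifold $W$, where $\pi:V\to W$; this sheaf is soft as a retract of the soft sheaf $\pi_*C^\infty_V$, and $\Gamma(W,\mc C^\infty_W)=C^\infty(V)^G$. The stalk of $\mc C^\infty_W$ at the image $\bar v$ of $v\in V$ is $(C^\infty_{V,v})^{G_v}$ (by Shapiro's lemma applied to the finite fibre $\pi^{-1}(\bar v)\cong G/G_v$, $G_v$ the stabilizer), so the argument reduces to the claim that $(C^\infty_{V,v})^{G_v}$ is flat over $\mc O_{\tilde V/\!\!/G,\bar v}$ for every $v\in V$. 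Under (iii') one has $G_v=1$, so this is the $G=1$ case; equivalently, since $\tilde V$ is then smooth along $V$ and $G$ acts freely near $V$, the pair $(\tilde V/\!\!/G,V/G)$ again satisfies (i), (ii), (iii') and the case $G=1$ applies to it. Under (iii) the group $G_v$ may be nontrivial; using an equivariant linearization at the smooth point $v$ (Luna's slice theorem) one reduces to the model $\tilde V=\C^n$, $V=\R^n$, $v=0$, $G_v=\Gamma\subseteq GL_n(\R)$ acting linearly, where the statement becomes: the ring of $\Gamma$-invariant $C^\infty$-germs at $0$ is flat over $\R[x_1,\dots,x_n]^\Gamma$ localized at the origin. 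I expect the main obstacle to be precisely this last point: it does \emph{not} follow from the non‑equivariant result, because $\tilde V\to\tilde V/\!\!/G$ fails to be flat at points whose stabilizer is not generated by reflections, so $C^\infty_{V,v}$ itself need not be flat over $\mc O_{\tilde V/\!\!/G,\bar v}$ — only its $G_v$-invariant part is, and extracting this requires a Schwarz‑type theorem on invariant smooth functions (the equivariant refinement of Schwarz's theorem, describing smooth covariants as modules over the invariants), which plays here the role that Malgrange's flatness theorem plays in the non‑equivariant case.
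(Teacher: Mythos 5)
Your approach is genuinely different from the paper's. The paper never passes to stalks or germ rings at all: it works with finitely generated Fr\'echet $C^\infty(V)^G$-modules, shows (Lemma \ref{lem:1.11}) that the formal completions $\hat{M}_{Gv}\to\hat{M'}_{Gv}$ remain injective for all $v$, and then invokes a Whitney-spectral-theorem type statement (Lemma \ref{lem:1.3}, plus a refinement of the \cite{OpSo}/\cite{MeTo} argument) saying that a closed Fr\'echet submodule whose formal completion vanishes at every orbit is zero. You instead sheafify, use softness of $C^\infty_V$-modules to reduce the vanishing of $\mathrm{Tor}^{A}_j(A/I,C^\infty(V))$ to the vanishing at the level of stalks, and prove the resulting local statement by factoring through real-analytic germs, using Malgrange's flatness theorem plus the ``flat on completions implies flat'' criterion for Noetherian local rings. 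That part of your plan is sound, including the observation (which the paper leaves implicit) that (iii) or (iii') together with Zariski-density forces $\tilde V$ to be regular along $V$, so that $\widehat{\mc O}_{\tilde V,v}$ really is a formal power series ring and the completion map is a split coordinate inclusion.

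However, there is a genuine gap in the case (iii) with $G$ acting non-freely, and you flag it yourself but do not resolve it. Your sheaf-theoretic localization on $V/G$ reduces the problem to the assertion that $(C^\infty_{V,v})^{G_v}$ is flat over $(\mc O_{\tilde V,v})^{G_v}$, and you correctly note that this does \emph{not} follow from the non-equivariant result, since $\tilde V\to\tilde V/\!\!/G$ is generally not flat. You propose handling this by an equivariant Schwarz/Malgrange theorem, but you neither state a precise such theorem nor give the deduction. As it stands, ``I expect the main obstacle to be $\ldots$'' is an acknowledgement that the proof is not finished for (iii). Notably, this is exactly the point at which the paper's and your strategies diverge most sharply: in the paper's formal-completion framework, (iii) is the \emph{easy} case, because (iii) gives $FP_v^{G_v}\cong\widetilde{FP}_v^{G_v}$ outright and hence trivial injectivity on formal completions; it is (iii'), where $G_v=1$, that requires a genuine flatness argument ($FP_v$ flat over $\widetilde{FP}_v$). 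Your germ-level reduction flips the difficulty and turns (iii) into the harder local problem, without supplying the missing equivariant flatness theorem needed to close it. So the proposal is a valid alternative proof for (iii') (in particular for $G=1$), but it does not yet prove the theorem under (iii).
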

The proof runs mainly via formal completions of $C^\infty (V)^G$-modules. We remark that $C^\infty (V)^G$
can be substantially more complicated than $C^\infty (V)$, for instance its Hochschild homology can be
nontrivial in degrees beyond the dimension of $V$.

Our main application of this result is to the Hochschild homology of finite type algebras, as studied
in \cite{KNS}. Recall that a unital algebra $A$ (not necessarily commutative) is a finite type 
$\mc O (\tilde V)^G$-algebra if an algebra homomorphism from $\mc O (\tilde V)^G$ to the centre of $A$ is
given, and makes $A$ into a finitely generated $\mc O (\tilde V)^G$-module. Under the above conditions 
\[
C^\infty (V)^G \underset{\mc O (\tilde V)^G}{\otimes} A
\] 
is a Fr\'echet algebra (this is why we need $V$ to be real-analytic). Furthermore it is finitely 
generated as $C^\infty (V)^G$-module, so it is reasonable to regard it as a smooth finite type algebra.

\begin{thmintro}\label{thm:C} 
\textup{(see Theorem \ref{thm:1.10})} \\
Let $A$ be a unital finite type $\mc O (\tilde V)^G$-algebra and let $M$ be a finitely generated 
$A$-bimodule. Assume that (i), (ii) and (iii) from Conditions \ref{cond} hold. There is 
a natural isomorphism of Fr\'echet $C^\infty (V)^G$-modules
\[
C^\infty (V)^G \underset{\mc O (\tilde V)^G}{\otimes} H_n (A, M) \longrightarrow 
H_n \Big( C^\infty (V)^G \underset{\mc O (\tilde V)^G}{\otimes} A , 
C^\infty (V)^G \underset{\mc O (\tilde V)^G}{\otimes} M \Big) .
\]
\end{thmintro}

We note that on the right hand side the Hochschild homology involves the to\-po\-lo\-gy of the algebra,
via the complete projective tensor product of Fr\'echet spaces. Theorem \ref{thm:C} is a smooth 
version of an earlier result with formal completions \cite[Theorem 3]{KNS}. An advantage of Theorem 
\ref{thm:C} is that it reduces the computation of the Hochschild homology of certain Fr\'echet 
algebras to the Hochschild homology of finite type algebras, about which a lot is known from \cite{KNS}.

It would be interesting to draw consequences from Theorem \ref{thm:C} for the periodic 
cyclic homology of $A$ \cite[\S 5.1.3]{Lod} and of the Fr\'echet algebra 
$C^\infty (V)^G \underset{\mc O (\tilde V)^G}{\otimes} A$ (using the definition from 
\cite[\S 2]{BrPl}). The periodic cyclic homology 
$HP_n (A)$ of the finite type algebra $A$ was analysed in \cite[\S 4]{KNS}. For instance, it 
follows easily from \cite[Theorem 10]{KNS} that $HP_n (A)$ has finite dimension. The inclusion
\[
A \to C^\infty (V)^G \underset{\mc O (\tilde V)^G}{\otimes} A \quad \text{induces a linear map}
\quad HP_n (A) \to HP_n \big( C^\infty (V)^G \underset{\mc O (\tilde V)^G}{\otimes} A \big) ,
\]
which in some cases is a bijection \cite[\S 1]{Sol}. However, that does not hold in the 
generality of Theorem \ref{thm:C}, for instance because $V$ and $\tilde V$ 
can have different cohomology.

To improve the effect of Theorem \ref{thm:C} for the computation of the Hochschild homology of 
smooth finite type algebras, we make its left hand side explicit in some cases.
That involves one result of general nature:

\begin{thmintro}\label{thm:D} 
\textup{(see Theorem \ref{thm:1.5})} \\
Let $V$ be a smooth manifold with an action of a finite group $G$. Then
$C^\infty (V)$ is finitely generated as $C^\infty (V)^G$-module.
\end{thmintro}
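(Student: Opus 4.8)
The plan is to reduce the statement, by a $G$-equivariant embedding of $V$ into a linear representation, to the corresponding fact for linear actions of finite groups on Euclidean spaces, where it follows from the classical structure theory of smooth invariants. So first I would construct a $G$-equivariant smooth embedding of $V$ into a finite-dimensional real representation of $G$. Choose any smooth embedding $\iota_0 \colon V \hookrightarrow \R^m$ (one exists since $V$ is compact) and define $\iota \colon V \to (\R^m)^{G}$ by $\iota(v) = \big( \iota_0(g^{-1} v) \big)_{g \in G}$, where $G$ acts on $(\R^m)^{G}$ by permuting the $|G|$ blocks by left translation. Then $\iota$ is $G$-equivariant, it is again an embedding (its $g = 1$ component is $\iota_0$), and $G$ acts on $W := (\R^m)^{G} \cong \R^{m|G|}$ through permutation matrices, in particular linearly. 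Since $V$ is compact, $\iota(V)$ is a compact, hence closed, $G$-stable submanifold of $W$, and I identify $V$ with $\iota(V)$.

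Next I would descend from $W$ to $V$. By the tubular neighbourhood theorem every smooth function on the closed submanifold $V$ extends to $W$, so the restriction map $C^\infty(W) \to C^\infty(V)$ is surjective, and it is $G$-equivariant because $V$ is $G$-stable. Granting that $C^\infty(W) = \sum_{j=1}^{r} C^\infty(W)^{G} h_j$ for finitely many $h_j$, take $f \in C^\infty(V)$, extend it to $F \in C^\infty(W)$, write $F = \sum_j a_j h_j$ with $a_j \in C^\infty(W)^{G}$, and restrict: then $f = \sum_j (a_j|_V)(h_j|_V)$, and each $a_j|_V$ lies in $C^\infty(V)^{G}$ by equivariance. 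Hence $C^\infty(V) = \sum_j C^\infty(V)^{G} (h_j|_V)$, and the theorem is reduced to a linear action of the finite group $G$ on the Euclidean space $W$.

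For that linear case one invokes invariant theory. The graded ring $\R[W]^{G}$ is a finitely generated $\R$-algebra of Krull dimension $d := \dim W$, over which $\R[W]$ is a finite module; fix module generators $q_1, \dots, q_r \in \R[W]$ and, by Noether normalisation, a homogeneous system of parameters $t_1, \dots, t_d \in \R[W]^{G}$ over which $\R[W]$ is finite. The polynomial map $t = (t_1, \dots, t_d) \colon W \to \R^{d}$ is then a finite, hence proper, morphism of affine varieties; on real points it is proper because each coordinate function on $W$ is integral over $\R[t_1, \dots, t_d]$, so the $t$-preimage of a bounded set is bounded. The analytic input is now that $C^\infty(W)$ is a finite module over $t^{*} C^\infty(\R^{d})$ with generators $q_1, \dots, q_r$; this is the global form of Malgrange's preparation theorem, and is also a consequence of G. Schwarz's theorem on smooth invariant functions together with its relative version. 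Since $t^{*} C^\infty(\R^{d}) \subseteq C^\infty(W)^{G}$, we get $C^\infty(W) = \sum_j C^\infty(W)^{G} q_j$, as required (for complex-valued functions one tensors with $\C$, which changes nothing since $G$ is finite).

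The crux, and the only non-elementary ingredient, is this last step: passing from ``$\R[W]$ is module-finite over $\R[W]^{G}$'' to the same statement for smooth functions. This is not formal, since rings of smooth functions are highly non-Noetherian, and it rests on a division theorem (Malgrange's preparation theorem, or Schwarz's theorem). The remaining steps are soft; it is precisely compactness of $V$ that makes $\iota(V)$ closed and permits the extension of smooth functions from $V$ to $W$.
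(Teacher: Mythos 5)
Your proof is correct and takes a genuinely different route from the paper's. Both proofs first treat the linear case of a $G$-action on a Euclidean space $W$ and then reduce the compact-manifold case to it, but the reductions differ: the paper covers $V$ by finitely many geodesic balls for a $G$-invariant Riemannian metric and patches with a $G$-invariant partition of unity, applying the linear case in each ball via the $G_v$-equivariant exponential chart, whereas you build a single $G$-equivariant closed embedding $\iota\colon V\hookrightarrow W=(\R^m)^{G}$ and pull a finite generating set back through a (not necessarily equivariant) extension $C^\infty(W)\twoheadrightarrow C^\infty(V)$. Your reduction is arguably cleaner --- one global chart rather than a partition of unity --- and it is a nice observation that the extension need not be $G$-equivariant, since only the $G$-invariant coefficients $a_j$ get restricted to $V$. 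For the linear case, the paper quotes Po\'enaru's Lemme III.1.4.1 from \cite{Poe} with $W'=\C[G]$ (the regular representation), which is exactly your alternative justification ``Schwarz's theorem together with its relative version,'' so the two proofs ultimately rest on the same analytic input, and the detour through Noether normalization is not really needed if one accepts that lemma. Your primary justification --- a global Malgrange preparation theorem for the proper finite polynomial map $t=(t_1,\dots,t_d)\colon W\to\R^d$, yielding the stronger statement $C^\infty(W)=\sum_j t^{*}C^\infty(\R^{d})\,q_j$ --- is a legitimate and even more self-contained route, but it is the one place where you invoke a substantial theorem without a precise reference; to make it rigorous one should either cite a global division theorem for the proper map $t$ or spell out the patching of the local Malgrange preparation (noting that $t$ is proper but not surjective onto $\R^d$, so the argument runs over its closed image).
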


Let $\Omega^n (\tilde V)$ be the $\mc O (\tilde V)$-module of algebraic differential $n$-forms
on $\tilde V$ and denote the $C^\infty (V)$-module of smooth $n$-forms on $V$ by $\Omega^n_{sm}(V)$.

\begin{thmintro}\label{thm:E} 
\textup{(a special case of Lemma \ref{lem:1.9})} \\
Suppose that (i), (ii) and (iii) from Conditions \ref{cond} hold.
There is a natural isomorphism of Fr\'echet $C^\infty (V)^G$-modules
\[
C^\infty (V)^G \underset{\mc O (\tilde V)^G}{\otimes} \Omega^n (\tilde V)
\;\cong\; \Omega^n_{sm} (V) .
\]
\end{thmintro}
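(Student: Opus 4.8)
The plan is to strip away the finite group and the exterior power, and then to recognise the remaining assertion as a statement about formal completions forced by Conditions~\ref{cond}(iii). To deal with $G$, I would use the identification $C^\infty (V) \cong C^\infty (V)^G \otimes_{\mc O (\tilde V)^G} \mc O (\tilde V)$ — which follows from Theorem~\ref{thm:1.5} by an averaging argument, and which is in any case implicit in the reduction of Theorem~\ref{thm:1.1} to the case $G = 1$ — in order to rewrite
\[
C^\infty (V)^G \underset{\mc O (\tilde V)^G}{\otimes} \Omega^n (\tilde V)
= \Big( C^\infty (V)^G \underset{\mc O (\tilde V)^G}{\otimes} \mc O (\tilde V) \Big) \underset{\mc O (\tilde V)}{\otimes} \Omega^n (\tilde V)
= C^\infty (V) \underset{\mc O (\tilde V)}{\otimes} \Omega^n (\tilde V) .
\]
Since the formation of exterior powers commutes with base change, $C^\infty (V) \otimes_{\mc O (\tilde V)} \Omega^n (\tilde V) = \bigwedge^n_{C^\infty (V)} \big( C^\infty (V) \otimes_{\mc O (\tilde V)} \Omega^1 (\tilde V) \big)$, while $\Omega^n_{sm}(V) = \bigwedge^n_{C^\infty (V)} \Omega^1_{sm}(V)$. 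So everything reduces to showing that the natural restriction-of-forms map
\[
\beta \colon \; C^\infty (V) \underset{\mc O (\tilde V)}{\otimes} \Omega^1 (\tilde V) \longrightarrow \Omega^1_{sm}(V), \qquad \varphi \otimes df \mapsto \varphi \, d(f|_V),
\]
is an isomorphism; it is then automatically compatible with exterior powers and with the $G$-action, which identifies the natural map in the statement as an isomorphism.

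To prove that $\beta$ is an isomorphism I would pass to formal completions, in the spirit of the proof of Theorem~\ref{thm:1.1}. Both sides of $\beta$ are finitely presented $C^\infty (V)$-modules: the source because $\Omega^1 (\tilde V)$ is finitely presented over the Noetherian ring $\mc O (\tilde V)$; the target because $V$ is compact, so that $\Omega^1_{sm}(V)$ is the module of smooth sections of the complexified cotangent bundle, hence finitely generated projective over $C^\infty (V)$ by Serre--Swan. For compact $V$ the completions $\widehat{C^\infty (V)}_{\mf m_v}$, $v \in V$, are flat over $C^\infty (V)$, and since every maximal ideal of $C^\infty (V)$ is of the form $\mf m_v$ they jointly detect isomorphisms of finitely presented $C^\infty (V)$-modules; so it is enough to check that $\beta$ becomes an isomorphism after applying $\widehat{C^\infty (V)}_{\mf m_v} \otimes_{C^\infty (V)} (-)$ for every $v \in V$. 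Fix such a $v$ and put $m = \dim_\R V$. Then Borel's theorem gives $\widehat{C^\infty (V)}_{\mf m_v} = \C[[x_1, \dots, x_m]]$ for local coordinates $x_i$ at $v$, and the functor above carries the source of $\beta$ to $\widehat{C^\infty (V)}_{\mf m_v} \otimes_{\widehat{\mc O (\tilde V)}_{\mf m_v}} \Omega^{1,\mr{cont}}_{\widehat{\mc O (\tilde V)}_{\mf m_v}}$ and its target to $\Omega^{1,\mr{cont}}_{\widehat{C^\infty (V)}_{\mf m_v}}$ — here one uses that completion base-changes finitely presented modules and turns the modules of algebraic forms into the modules of continuous Kähler differentials of the completed rings. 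Now the inclusion $\mc O (\tilde V) \hookrightarrow C^\infty (V)$ induces a local homomorphism $\widehat{\mc O (\tilde V)}_{\mf m_v} \to \widehat{C^\infty (V)}_{\mf m_v}$, which is injective by the remark recalled in the introduction and which Conditions~\ref{cond}(iii) renders bijective on cotangent spaces; since $\widehat{C^\infty (V)}_{\mf m_v}$ is a complete local $\C$-algebra whose maximal ideal is topologically generated by $m$ elements, any such homomorphism is automatically surjective, hence an isomorphism of $\C$-algebras. Therefore the completion of $\beta$ at $v$ is the base-change map of continuous differentials along a ring isomorphism, so it is an isomorphism; as $v$ was arbitrary, $\beta$ is an isomorphism.

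Finally, the algebraic isomorphism just obtained is automatically a homeomorphism of Fréchet $C^\infty (V)^G$-modules: both sides are finitely generated $C^\infty (V)^G$-modules (on the right using Theorem~\ref{thm:1.5}) carrying their canonical Fréchet topologies — the left-hand side by the construction preceding Theorem~\ref{thm:1.10}, the right-hand side as the space of smooth sections of a vector bundle over a compact manifold — and a $C^\infty (V)^G$-linear map between such modules is continuous, so by the open mapping theorem for Fréchet spaces a bijective one is a topological isomorphism. I expect the main obstacle to be the completion step: one must justify carefully that the completions at the points of $V$ detect isomorphisms of finitely presented $C^\infty (V)$-modules — this is precisely where compactness of $V$ is used and where the failure of $C^\infty (V)$ to be Noetherian must be navigated (one handles the cokernel by a Nakayama argument over the maximal ideals $\mf m_v$, and the kernel, which is still finitely generated, by the flatness of the $\widehat{C^\infty (V)}_{\mf m_v}$) — and, hand in hand with it, one must use Conditions~\ref{cond}(iii) to promote the known inclusion $\widehat{\mc O (\tilde V)}_{\mf m_v} \hookrightarrow \widehat{C^\infty (V)}_{\mf m_v}$ to an isomorphism. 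By comparison, the elimination of $G$ and of the exterior power is routine once the identification $C^\infty (V) \cong C^\infty (V)^G \otimes_{\mc O (\tilde V)^G} \mc O (\tilde V)$ is available.
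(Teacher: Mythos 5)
Your overall strategy — pass to formal completions at each $v\in V$, use (iii) to identify $\widehat{\mc O(\tilde V)}_v$ with $FP_v$, and exploit the compactness of $V$ to conclude — is in the same spirit as the injectivity half of the paper's argument (Proposition~\ref{prop:1.6}). But there are two genuine issues, one structural and one technical.

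The structural issue is the first step: you reduce to $G=1$ by invoking the identification $C^\infty(V) \cong C^\infty(V)^G \otimes_{\mc O(\tilde V)^G} \mc O(\tilde V)$ and claim it ``follows from Theorem~\ref{thm:1.5} by an averaging argument'' and ``is in any case implicit in the reduction of Theorem~\ref{thm:1.1} to the case $G=1$.'' Neither is accurate. Theorem~\ref{thm:1.5} only gives that $C^\infty(V)$ is finitely generated as a $C^\infty(V)^G$-module; it does not say the generators can be taken in $\mc O(\tilde V)$ (surjectivity of the map you want), and it says nothing about injectivity. In fact, your identification is precisely the $n=0$ case of the very statement being proved, and the paper establishes it only through the same machinery (Proposition~\ref{prop:1.6} for injectivity, Lemma~\ref{lem:1.4}(b) for surjectivity). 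Also, the paper's proof of Theorem~\ref{thm:1.1} does \emph{not} reduce to $G=1$; it works directly with $G$-invariants throughout. The reduction is therefore circular as you present it. This is not fatal — your formal-completion argument could be carried out directly with the rings $\mc O(\tilde V)^G$ and $C^\infty(V)^G$, completing at the orbit $Gv$ rather than at $v$, as in Proposition~\ref{prop:1.6} and Lemma~\ref{lem:1.11} — but the shortcut as written does not stand.

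The technical issue is the pivot of your argument: the claim that the formal completions $\widehat{C^\infty(V)}_{\mf m_v}\cong FP_v$ are flat over $C^\infty(V)$ and that, for compact $V$, they jointly detect isomorphisms of finitely presented modules. The flatness of $FP_v$ over $C^\infty(V)$ is a substantial fact (of Malgrange--Tougeron type), asserted here without reference; it is noticeably subtle since $FP_v$ is a \emph{quotient} of $C^\infty(V)$ by the non-finitely-generated ideal $I_v^\infty$ of flat functions, and $C^\infty(V)$ is far from Noetherian. The paper deliberately avoids relying on this flatness: injectivity is handled by the pointwise completion computation in Proposition~\ref{prop:1.6} combined with the delicate construction from \cite[pp.~183--184]{MeTo} used in the proof of Theorem~\ref{thm:1.1} (needed because $\ker(\mu)$ may not be a finitely generated Fréchet module), and surjectivity is obtained directly from Lemma~\ref{lem:1.4}(b), which shows that $\Omega^n_{sm}(V)$ is generated as a $C^\infty(V)^G$-module by a finite subset of $\Omega^n(\tilde V)$ — this is where compactness and Theorem~\ref{thm:1.5} actually enter. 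That split (flatness-based completion argument for injectivity, direct generation argument for surjectivity, then open mapping theorem) is cleaner than your unified Nakayama-over-all-$\mf m_v$ scheme and sidesteps the flatness of $FP_v$ entirely. If you wish to keep your route, you must supply a reference or proof for the flatness of $FP_v$ over $C^\infty(V)$, and you should also note explicitly that your Nakayama step uses the classical fact that every maximal ideal of $C^\infty(V)$ is some $\mf m_v$ when $V$ is compact.
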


\noindent From Theorems \ref{thm:C} and \ref{thm:E} one can easily deduce a smooth version of
the Hochschild--Kostant--Rosenberg theorem, see Section \ref{sec:ex}. Obviously that would be 
an extremely roundabout proof. The advantage of our methods is rather that they apply to much 
wider classes of algebras, possibly noncommutative. In particular our results will be useful for 
the computation of the Hochschild homology of the Harish-Chandra--Schwartz algebra of a 
reductive $p$-adic group, for which we refer to \cite{Sol2}.\\[1mm]

\textbf{Acknowledgements.}\\
We thank Roman Bezrukavnikov and Sacha Braverman for interesting discussions,
which motivated these investigations.

A big thanks goes to the referees, whose detailed reports helped to solve a lot of problems
and lead to big improvements in the paper.

\renewcommand{\theequation}{\arabic{section}.\arabic{equation}}
\numberwithin{equation}{section}

\section{Flatness of smooth functions as module over regular functions}

Let $V$ be a smooth manifold (without boundary) and let $G$ be a finite group acting on $V$ 
by diffeomorphisms. Consider the algebra $C^\infty (V)^G$ of $G$-invariant smooth complex-valued 
functions on $V$. For each $v \in V$ we have the closed maximal ideal $I_v \subset C^\infty (V)$ 
of functions vanishing at $v$ and the closed ideal $I_{Gv}$ of functions vanishing on $Gv$.
The $G$-invariant elements in the latter form an ideal $I_{Gv}^G \subset C^\infty (V)^G$. Let 
$FP_v$ be the Fr\'echet algebra of formal power series on an in\-fi\-ni\-tesimal neighborhood of $v$ in 
$V$ and let $FP_v^{G_v}$ be the subalgebra of $G_v$-invariants. Then
$FP_v \cong \varprojlim\nolimits_n C^\infty (V) / I_v^n$  and
\begin{equation}\label{eq:2.1}
FP_v^{G_v} \cong \big( \bigoplus\nolimits_{v' \in Gv} FP_{v'} \big)^G \cong 
\big( \varprojlim\nolimits_n C^\infty (V) / I_{Gv}^n \big)^G \cong
\varprojlim\nolimits_n C^\infty (V)^G \big/ I_{Gv}^{n,G} .
\end{equation}
By a theorem of Borel (see \cite[Th\'eor\`eme IV.3.1 and Remarque IV.3.5]{Tou} or
\cite[Theorem 26.29]{MeVo}) the Taylor series map
\[
\mf T_v : C^\infty (M) \to FP_v
\]
is surjective. Its kernel is the module $I_v^\infty$ of functions that are flat at $v$. 
Similarly we have the ideal 
\[
I_{Gv}^\infty = \bigcap\nolimits_{v' \in Gv} I_{v'}^\infty \: \subset \: C^\infty (V)
\]
of functions that are flat on $Gv$. In these terms \eqref{eq:2.1} becomes an isomorphism
\begin{equation}\label{eq:1.4}
FP_v^{G_v} \cong C^\infty (V)^G \big/ I_{Gv}^{\infty,G}. 
\end{equation}
For any Fr\'echet $C^\infty (V)^G$-module $M$ we can form the ``formal completion" at $v$:
\begin{equation}\label{eq:1.1}
\hat M_{Gv} := FP_v^{G_v} \underset{C^\infty (V)^G}{\hat{\otimes}} M \cong 
M \big/ \overline{I_{Gv}^{\infty,G} M} .
\end{equation}
In contrast with the algebraic setting, $\hat M_{Gv}$ is actually a 
quotient rather than a completion of $M$.

\begin{lem}\label{lem:1.3}
Let $M$ be a finitely generated Fr\'echet $C^\infty (V)^G$-module. Let $M_1$ and $M_2$ be 
closed $C^\infty (V)^G$-submodules of $M$, such that $M_1 \supset M_2$ and 
$\widehat{M_1}_{Gv} = \widehat{M_2}_{Gv}$ for all $v \in V$. Then $M_1 = M_2$. 
\end{lem}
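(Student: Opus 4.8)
The plan is to show that the quotient module $M_1/M_2$ vanishes by proving that it is a finitely generated Fréchet $C^\infty(V)^G$-module all of whose formal completions are zero, and then invoking a Nakayama-type principle in this smooth setting. First I would observe that $M_1/M_2$ is again a finitely generated Fréchet $C^\infty(V)^G$-module (a quotient of a finitely generated Fréchet module by a closed submodule), so it suffices to reduce to the case $M_2 = 0$: replace $M$ by $M_1$ and $M_1, M_2$ by $M_1/M_2$ and $0$. The hypothesis $\widehat{(M_1)}_{Gv} = \widehat{(M_2)}_{Gv}$ then becomes $\widehat N_{Gv} = 0$ for all $v \in V$, where $N := M_1/M_2$, and the goal is $N = 0$.

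Next I would use the description of the formal completion from \eqref{eq:1.1}, namely $\widehat N_{Gv} \cong N/\overline{I_{Gv}^\infty N}$. Vanishing of this for all $v$ says $N = \overline{I_{Gv}^\infty N}$ for every $v$; in particular every element of $N$ is, at every point $v$, approximable by elements lying in $I_{Gv}^\infty N$, i.e. in the submodule generated by functions flat at $v$. The strategy is to convert this pointwise flatness into genuine vanishing. Pick finitely many generators $n_1, \dots, n_k$ of $N$ over $C^\infty(V)^G$; for a fixed $n_i$ and a fixed $v$, write $n_i$ as a limit of sums $\sum_j f_j^{(v)} m_j^{(v)}$ with $f_j^{(v)} \in I_{Gv}^\infty$. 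The key point I expect to exploit is that functions flat at $v$ can be multiplied by a bump function supported near $v$ without changing their germ at $v$, and conversely a smooth partition-of-unity argument on $V$ (or on $V/G$, using $G$-invariant bump functions, which exist since $G$ is finite) lets one patch local data. Concretely: given $v$, there is a $G$-invariant open $U \ni Gv$ and, using that the "tail" $\overline{I_{Gv}^\infty N}$ absorbs all of $N$, one produces an element of $N$ agreeing with $n_i$ on a neighborhood of $Gv$ but expressible through flat functions; combined over a locally finite $G$-invariant cover this should force $n_i$ itself into the closure of the submodule generated by functions flat somewhere — but a smooth function that is flat at *some* point is still a unit times... no: here the real input is rather that $\bigcap_v I_{Gv}^n$-type intersections, summed against a partition of unity, collapse.

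I anticipate the main obstacle is precisely this globalization step: passing from "at each $v$, $N$ equals $\overline{I_{Gv}^\infty N}$" to "$N = 0$", because the submodules $\overline{I_{Gv}^\infty N}$ depend on $v$ and one must control the closures (Fréchet topology) uniformly. The clean way is topological Nakayama: $N$ is a finitely generated module over the Fréchet algebra $C^\infty(V)^G$, and the maximal ideals $I_{Gv}$ with $v \in V$ are (via \eqref{eq:1.4} and Borel's theorem) exactly the closed maximal ideals with "nice" residue behavior; if $N \neq 0$ then by finite generation $N/\overline{I_{Gv}N} \neq 0$ for some $v$ (a genuine Nakayama statement for finitely generated modules over $C^\infty$-type algebras, where $I_{Gv}$ is the ideal of a point and $N/\overline{I_{Gv}N}$ is a finite-dimensional vector space that is nonzero whenever the localization is), and since $\widehat N_{Gv}$ surjects onto $N/\overline{I_{Gv} N}$ we get $\widehat N_{Gv} \neq 0$, a contradiction. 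So the real work is to justify this last Nakayama-type implication for finitely generated Fréchet $C^\infty(V)^G$-modules; I would do it by choosing a minimal generating set, evaluating at the point $Gv$ where some generator is "needed", and using compact exhaustion of $V$ plus $G$-invariant cutoffs to ensure the relevant ideal membership is witnessed by honest smooth functions rather than merely formal ones.
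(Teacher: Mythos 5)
Your overall strategy --- reduce to showing a quotient module with vanishing formal completions is zero, and relate this to a Nakayama-type statement --- is in the right spirit, but there are two concrete gaps, one of which is fatal as written.

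First, the reduction to $N := M_1/M_2$ is problematic: you assert that $M_1/M_2$ is "a quotient of a finitely generated Fr\'echet module by a closed submodule," but $M_1$ is only known to be a closed submodule of the finitely generated module $M$, and $C^\infty(V)^G$ is \emph{not} Noetherian, so $M_1$ (hence $M_1/M_2$) need not be finitely generated. (For instance, $I_v^\infty \subset C^\infty(\R)$ is a closed ideal that is not finitely generated.) Since your Nakayama step explicitly invokes finite generation of $N$, this breaks the argument. The paper sidesteps this by never reducing to $M_1/M_2$ as a standalone module: it presents $M$ as a quotient $p : N \to M$ of a finitely generated \emph{free} module $N$ with closed kernel, pulls back $M_i$ to closed submodules $N_i = p^{-1}(M_i)$ of $N$, and works entirely inside the free module. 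Second, even granting finite generation, the Nakayama step --- that $N/\overline{I_{Gv}N} = 0$ for all $v$ forces $N = 0$ --- is precisely the hard point, and your proposed justification ("minimal generating set, evaluating at a point, compact exhaustion plus cutoffs") is too vague to be checked; you correctly sense this is where the work lies, but you do not identify the tool that actually closes it. The paper's key input is the Whitney spectral theorem in the form of Tougeron's Corollaire V.1.6: for a submodule $N_2$ of a free module $N = (C^\infty(V)^G)^r$, one has $\bigcap_{v\in V}\bigl(I_{Gv}^\infty N + N_2\bigr) = \overline{N_2}$. Combined with the vanishing of $\widehat{N_1/N_2}_{Gv}$ (which via \eqref{eq:1.1} gives $N_1 \subset I_{Gv}^\infty N + N_2$ for each $v$), this immediately yields $N_1 \subset \overline{N_2} = N_2$. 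So the paper never needs to pass from $I_{Gv}^\infty$ down to $I_{Gv}$ or argue via reduction modulo maximal ideals at all; the spectral theorem handles the globalization directly in terms of flat functions. Your instinct that the globalization step is the obstacle is exactly right; what is missing is Tougeron's theorem, and the workaround of passing to the free module to keep everything closed and amenable to that theorem.
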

\begin{proof}
By assumption there exists a finitely generated free $C^\infty (V)^G$-module $N$ and a surjective
homomorphism of Fr\'echet $C^\infty (V)^G$-modules $p : N \to M$. By the continuity of $p$, 
$N_i := p^{-1}(M_i)$ is a closed $C^\infty (V)^G$-submodule of $N$. For any $v \in V$ we have
\[
\widehat{N_1 / N_2}_{Gv} \cong \widehat{M_1 / M_2}_{Gv} = 0.
\]
From that and \eqref{eq:1.1} we deduce
\begin{equation}\label{eq:1.2}
N_1 / N_2 = \overline{I_{Gv}^{\infty,G} (N_1 / N_2)} = \overline{I_{Gv}^{\infty,G} N_1 + N_2} / N_2 
\subset \overline{I_{Gv}^{\infty,G} N + N_2} / N_2 .
\end{equation}
This holds for all $v \in V$, so 
\[
N_1 \subset \bigcap\nolimits_{v \in V} \overline{I_{Gv}^{\infty,G} N + N_2} .
\]
Consider the finitely generated free $C^\infty (V)$-module 
$C^\infty (V) \underset{C^\infty (V)^G}{\hat{\otimes}} N$. In there we have $C^\infty (V)$-submodules
\[
C^\infty (V) N_1 \: \subset \: C^\infty (V) 
\Big( \bigcap\nolimits_{v \in V} \overline{I_{Gv}^{\infty,G} N + N_2} \Big) \: \subset \:
\bigcap\nolimits_{v \in V} \overline{  I_{Gv}^\infty N + C^\infty (V) N_2 } .
\]
Applying the Taylor series map, we find 
\[
\mf T_v (C^\infty (V) N_1) \subset \overline{\mf T_v (C^\infty (V) N_2)} .
\]
By \cite[Th\'eor\`eme V.1.3]{Tou} for the variety $\{v\}$, the right hand side equals 
$\mf T_v (C^\infty (V) N_2)$. As $N_1 \supset N_2$, we deduce that $C^\infty (V) N_1$ and 
$C^\infty (V) N_2$ have the same Taylor series at every $v \in V$.
By Whitney's spectral theorem \cite[Corollaire V.1.6]{Tou}, this implies 
\begin{equation}\label{eq:2.2}
C^\infty (V) N_1 \subset \overline{C^\infty (V) N_2} .
\end{equation}
Taking $G$-invariants inside $C^\infty (V) \underset{C^\infty (V)^G}{\hat{\otimes}} N$, we obtain
\[
N_1 = (C^\infty (V) N_1 )^G \subset \overline{C^\infty (V) N_2}^G = \overline{N_2} = N_2.
\]
Hence $N_1 = N_2$ and $M_1 = M_2$.
\end{proof}

In this context it useful to mention the following slight generalization of a result of Malgrange 
\cite[Corollaire VI.1.8]{Tou}.

\begin{thm}\label{thm:1.7}
Assume that $V$ is real analytic, and let $M$ be a $C^\infty (V)^G$-submodule of 
$\big( C^\infty (V)^G \big)^r$ generated by finitely many real-analytic $G$-invariant functions 
from $V$ to $\C^r$. Then $M$ is closed in $\big( C^\infty (V)^G \big)^r$.
\end{thm}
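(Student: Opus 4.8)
The plan is to reduce the $G$-invariant statement to Malgrange's theorem \cite[Corollaire VI.1.8]{Tou}, which asserts precisely the analogous closedness for the trivial group: a $C^\infty(V)$-submodule of $(C^\infty(V))^r$ generated by finitely many real-analytic functions $V \to \C^r$ is closed. The strategy is to realize $M$ as (a retract of) the intersection of such a non-equivariant module with the closed subspace of $G$-invariant vectors.

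First I would set up notation: let $f_1, \dots, f_k \colon V \to \C^r$ be real-analytic $G$-invariant functions generating $M$ as a $C^\infty(V)^G$-module, and let $\tilde M \subset (C^\infty(V))^r$ be the $C^\infty(V)$-module they generate. By Malgrange's result $\tilde M$ is closed in $(C^\infty(V))^r$, hence $\tilde M \cap \big( (C^\infty(V))^r \big)^G$ is closed in $\big( (C^\infty(V))^r \big)^G = (C^\infty(V)^G)^r$. The key step is then to show
\begin{equation*}
M = \tilde M \cap \big( (C^\infty(V))^r \big)^G .
\end{equation*}
The inclusion $M \subset \tilde M \cap (C^\infty(V)^G)^r$ is immediate since the $f_i$ are $G$-invariant and $M$ is generated over $C^\infty(V)^G$ by them. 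For the reverse inclusion, suppose $\phi = \sum_i g_i f_i$ with $g_i \in C^\infty(V)$ and $\phi$ is $G$-invariant. Apply the averaging projector $e = |G|^{-1}\sum_{\gamma \in G} \gamma$, which is a continuous $C^\infty(V)^G$-linear retraction of $C^\infty(V)$ onto $C^\infty(V)^G$ (and acts coordinate-wise on $(C^\infty(V))^r$, commuting with the $G$-action on $V$). Since each $f_i$ is $G$-invariant, $e(g_i f_i) = e(g_i) f_i$, and since $\phi$ is already invariant, $\phi = e(\phi) = \sum_i e(g_i) f_i \in M$ because $e(g_i) \in C^\infty(V)^G$. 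This proves equality, and closedness of $M$ follows.

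The main obstacle I anticipate is making sure the averaging argument is carried out at the level of the \emph{module} rather than just the ambient space: one must verify that $e$ restricts to a continuous projection on the relevant spaces and that $\gamma$-invariance of the $f_i$ really lets one pull $e$ past them. A secondary point is confirming that $\big( (C^\infty(V))^r \big)^G$ is genuinely closed in $(C^\infty(V))^r$ — this is clear since it is the joint kernel of the continuous operators $\gamma - \mathrm{id}$, $\gamma \in G$ — and that the induced topology on it agrees with the natural Fr\'echet topology on $(C^\infty(V)^G)^r$, which holds because $C^\infty(V)^G$ is a closed subspace of $C^\infty(V)$ with the subspace topology. No deep input beyond Malgrange's theorem and the standard properties of the averaging operator is needed; the content of the lemma is essentially that the $G$-invariant theory inherits closedness from the full theory via this retraction.
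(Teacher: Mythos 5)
Your proposal is correct and follows essentially the same route as the paper: both reduce to Malgrange's theorem \cite[Corollaire VI.1.8]{Tou} applied to the $C^\infty(V)$-module $M'$ generated by the same $G$-invariant real-analytic functions, and then use the averaging idempotent $p_G = |G|^{-1}\sum_{g\in G} g$ together with the identity $p_G(g f_i) = p_G(g) f_i$ to show $M = M' \cap \big(C^\infty(V)^G\big)^r$. The paper phrases the averaging step as a chain of equalities of submodules culminating in $M = p_G M'$, whereas you apply $p_G$ to an arbitrary element of $M' \cap (C^\infty(V)^G)^r$; these are the same argument.
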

\begin{proof} 
Let $\{f_i \}$ be a finite set of analytic $G$-invariant functions from $V$ to $\C^r$. By 
\cite[Corollaire VI.1.8]{Tou} they generate a closed $C^\infty (V)$-submodule $M'$ of $C^\infty (V)^r$.

Assume that the $f_i$ generate $M$ as $C^\infty (V)^G$-module. Write $p_G = |G|^{-1} \sum_{g \in G} g$,
 an idempotent in $\C [G]$. Clearly $M \subset M' \cap (C^\infty (V)^G)^r$. On the other hand
\begin{align*}
M = \sum\nolimits_i C^\infty (V)^G f_i & = \sum\nolimits_i \big( p_G C^\infty (V) \big) f_i = 
p_G \sum\nolimits_i \big( p_G C^\infty (V) f_i \big) \\
& = p_G \big( \sum\nolimits_i C^\infty (V) f_i \big) = p_G M' \supset M' \cap (C^\infty (V)^G)^r.
\end{align*}
Hence $M = M' \cap (C^\infty (V)^G)^r$, which is closed in $(C^\infty (V)^G)^r$ because $M'$ is 
closed in $C^\infty (V)^r$.
\end{proof}

Let $\tilde V$ be a complex affine $G$-variety and recall the Conditions \ref{cond}.

\begin{lem}\label{lem:1.8}
Assume (i) and (ii) from Conditions \ref{cond} and let $M$ be a finitely ge\-ne\-ra\-ted 
$\mc O (\tilde V)^G$-module. The $C^\infty (V)^G$-modules 
\[
C^\infty (V)^G \underset{\mc O (\tilde V)^G}{\otimes} M ,\qquad
FP_v^{G_v} \underset{\mc O (\tilde V)^G}{\otimes} M \qquad \text{and} \qquad 
I_{Gv}^{\infty,G} \big( C^\infty (V)^G \underset{\mc O (\tilde V)^G}{\otimes} M \big)
\]
are nuclear Fr\'echet. The first two are generated by a finite subset of $M$.
\end{lem}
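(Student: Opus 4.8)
The plan is to reduce everything to the non-invariant case, where the relevant facts are classical, and then apply the idempotent $p_G = |G|^{-1}\sum_{g\in G} g$ to pass to $G$-invariants, much as in the proof of Theorem \ref{thm:1.7}. Write $A = \mc O(\tilde V)$, so $A^G = \mc O(\tilde V)^G$, and recall from Conditions \ref{cond}(i)--(ii) that $A$ embeds $G$-equivariantly in $C^\infty(V)$. Since $M$ is finitely generated over $A$, it is a quotient of $A^k$ for some $k$; fixing generators $m_1,\dots,m_k$ of $M$ over $A$, these also generate $C^\infty(V)^G \otimes_{A^G} M$ over $C^\infty(V)^G$ and $FP_v^{G_v} \otimes_{A^G} M$ over $FP_v^{G_v}$, which already gives the last sentence once we know these objects are the claimed kind of module.

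First I would treat $C^\infty(V) \otimes_A M$. Because $M$ is a finitely generated $A$-module, we may present it by a finite matrix of regular functions: there is an exact sequence $A^\ell \xrightarrow{\;\Phi\;} A^k \to M \to 0$ with $\Phi$ given by an $k\times\ell$ matrix over $A$. Tensoring with $C^\infty(V)$ (which is right exact) shows $C^\infty(V)\otimes_A M$ is the cokernel of the induced map $C^\infty(V)^\ell \to C^\infty(V)^k$. The columns of $\Phi$ are real-analytic functions $V \to \C^k$ (being restrictions of regular functions on $\tilde V$, using that $V$ is real analytic and Zariski-dense), so by Malgrange's theorem (the $G=1$ case already contained in \cite[Corollaire VI.1.8]{Tou}, as invoked in Theorem \ref{thm:1.7}) their span is a \emph{closed} $C^\infty(V)$-submodule of $C^\infty(V)^k$. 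Hence the cokernel, with the quotient topology, is a Hausdorff quotient of the Fréchet space $C^\infty(V)^k$ by a closed subspace, so it is Fréchet; and it is finitely generated, by the images of the standard basis vectors, i.e.\ by the images of $m_1,\dots,m_k$. The module $C^\infty(V)^G \otimes_{A^G} M$ is then obtained from this by applying $p_G$: arguing exactly as in Theorem \ref{thm:1.7}, one checks $C^\infty(V)^G\otimes_{A^G} M = p_G\big(C^\infty(V)\otimes_A M\big)$ as a direct summand (the functor $p_G(-)$ is exact and commutes with the tensor constructions because $p_G$ is central and $|G|$ is invertible), so it is a closed, in fact complemented, subspace of a Fréchet space, hence Fréchet, and finitely generated over $C^\infty(V)^G$ by the $p_G m_i$ — equivalently by a finite subset of $M$ once one notes $p_G$ acts as the identity after restricting to $A^G$-generators. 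The same reasoning, now using the Fréchet algebra $FP_v$ and its closed ideals, handles $FP_v^{G_v}\otimes_{A^G} M$: it is $p_G$ applied to $FP_v \otimes_A M$, which is again a finitely-presented quotient of $FP_v^k$ by the closed submodule generated by the columns of $\Phi$ (closedness of finitely generated submodules of free modules over the Noetherian local-type ring $FP_v$, or directly from the structure of formal power series).

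Finally, for the third module $I_{Gv}^\infty\big(C^\infty(V)^G\otimes_{A^G} M\big)$: writing $N := C^\infty(V)^G \otimes_{A^G} M$, which we have just shown is Fréchet and finitely generated over $C^\infty(V)^G$, the submodule $I_{Gv}^\infty N$ is closed in $N$. This follows because $I_{Gv}^\infty = \big(\bigcap_{v'\in Gv} I_{v'}^\infty\big)^G$ is closed in $C^\infty(V)^G$ by \cite[Corollaire V.1.6]{Tou}, and — more to the point — $I_{Gv}^\infty N$ is closed by the flat-module argument already deployed in Lemma \ref{lem:1.3} and \eqref{eq:1.1}: lifting to a free module $C^\infty(V)^G{}^r \twoheadrightarrow N$, the preimage of $I_{Gv}^\infty N$ is $I_{Gv}^\infty\cdot C^\infty(V)^G{}^r$ plus the kernel, and $I_{Gv}^\infty$ being a closed ideal makes $I_{Gv}^\infty\cdot C^\infty(V)^G{}^r$ closed in the free module; pushing forward gives closedness in $N$. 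A closed submodule of a Fréchet module is Fréchet, so we are done.

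The step I expect to be the real obstacle is the first one: verifying that the columns of the presentation matrix $\Phi$, viewed as maps $V \to \C^k$, genuinely fall under the hypotheses of Malgrange's closedness theorem. Concretely one must argue that the restriction to $V$ of a regular function on $\tilde V$ is real analytic — this uses Condition \ref{cond}(i) that $V$ is a real analytic submanifold — and then feed finitely many such functions into \cite[Corollaire VI.1.8]{Tou} exactly as in the proof of Theorem \ref{thm:1.7}. Everything after that — the $p_G$-splitting to descend to $G$-invariants, and the identification of the third module as a closed submodule — is a routine repetition of techniques already established earlier in this section.
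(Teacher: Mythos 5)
Your overall strategy (present $M$ by finitely many analytic functions, invoke Malgrange/Tougeron to get a closed image, then pass to $G$-invariants) is the right idea, but two of your reduction steps have genuine gaps.

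\textbf{The $p_G$-descent does not work with $\otimes_A$.} You first form $C^\infty(V)\otimes_A M$ with $A=\mc O(\tilde V)$ and then claim $C^\infty(V)^G\otimes_{A^G}M = p_G\big(C^\infty(V)\otimes_A M\big)$. But $C^\infty(V)\otimes_A M$ carries no $G$-action: the lemma only assumes $M$ is an $\mc O(\tilde V)$-module, with no $G$-equivariant structure, and the relation $fa\otimes m = f\otimes am$ for $a\in A$ is not $G$-stable (one would need $g(a)m=am$ for all $a,g$, which fails in general). So $p_G$ has nothing to act on. The averaging trick in Theorem~\ref{thm:1.7} works only because the generators there are already $G$-invariant. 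The fix — and this is what the paper does — is to present $M$ as a quotient of a free $\mc O(\tilde V)^G$-module from the outset (possible because $\mc O(\tilde V)$ is finite over the Noetherian ring $\mc O(\tilde V)^G$), so the presentation matrix has $G$-invariant analytic entries and Theorem~\ref{thm:1.7} applies directly, with no separate descent step. For the $FP_v^{G_v}$ module the paper does not reduce to $FP_v$ at all: it simply cites \cite[(30) and following]{OpSo}, which states that every finitely generated $FP_v^{G_v}$-module is Fréchet.

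\textbf{Closedness of the third module.} You argue that $I_{Gv}^\infty N$ is closed by lifting to a free module and noting $p^{-1}(I_{Gv}^\infty N)=(I_{Gv}^\infty)^r+\ker p$, where both summands are closed. But a sum of two closed subspaces of a Fréchet space need not be closed, and you give no reason why it is here. The paper avoids this by observing that $I_{Gv}^\infty\big(C^\infty(V)^G\otimes_{\mc O(\tilde V)^G}M\big)$ is the kernel of the continuous surjection onto $FP_v^{G_v}\otimes_{\mc O(\tilde V)^G}M$, which has already been shown to be Fréchet (hence Hausdorff); the kernel of a continuous map from a Fréchet space to a Hausdorff space is closed, so it is itself Fréchet. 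You should adopt this short-exact-sequence argument; it is both simpler and actually correct. Apart from these two points, your identification of the generators and your flagging of the real-analyticity of restricted regular functions are sound and match the paper.
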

\begin{proof}
Any finite set of generators of $M$ as $\mc O (\tilde V)$-module also generates the first two
$C^\infty (V)^G$-modules under consideration. By \cite[(30) and subsequent lines]{OpSo}, every
finitely generated $FP_v^{G_v}$-module is Fr\'echet, so in particular 
$FP_v^{G_v} \underset{\mc O (\tilde V)^G}{\otimes} M$.\\
Pick $r \in \Z_{>0}$ and a $\mc O (\tilde V)^G$-submodule $N$ of $\big( \mc O (\tilde V)^G 
\big)^r$ such that $M \cong \big( \mc O (\tilde V)^G \big)^r / N$. 
The kernel of the surjective homomorphism of $C^\infty (V)^G$-modules
\begin{equation}\label{eq:1.10}
\begin{aligned}
\big( C^\infty (V)^G \big)^r = \; & C^\infty (V)^G \underset{\mc O (\tilde V)^G}{\otimes} 
\big( \mc O ( \tilde V)^G \big)^r \longrightarrow \\ 
& C^\infty (V)^G \underset{\mc O (\tilde V)^G}{\otimes} \big( \mc O ( \tilde V)^G \big)^r / N 
= C^\infty (V)^G \underset{\mc O (\tilde V)^G}{\otimes} M
\end{aligned}
\end{equation}
is generated by $1 \otimes N$. Since $\mc O (\tilde V)^G$ is Noetherian, $N$ is generated as 
$\mc O (\tilde V)^G$-module by some finite subset $S_N$. Then the kernel of \eqref{eq:1.10} 
is generated by $1 \otimes S_N$. The analyticity of $V$ entails that $S_N$ consists of analytic 
$G$-invariant functions from $V$ to $\C^r$. Now Theorem \ref{thm:1.7} says that the kernel of 
\eqref{eq:1.10} is closed in $\big( C^\infty (V)^G \big)^r$. Hence $C^\infty (V)^G 
\underset{\mc O (\tilde V)^G}{\otimes} M$ is the quotient of $\big( C^\infty (V)^G \big)^r$ 
by a closed subspace, and in particular is a Fr\'echet space. 

In the short exact sequence of topological vector spaces
\[
0 \to I_{Gv}^{\infty,G} \big( C^\infty (V)^G \underset{\mc O (\tilde V)^G}{\otimes} M \big) \to
C^\infty (V)^G \underset{\mc O (\tilde V)^G}{\otimes} M \to
FP_v^{G_v} \underset{\mc O (\tilde V)^G}{\otimes} M \to 0
\]
the middle term is Fr\'echet and the right hand side is Hausdorff. Hence the left hand side
is a closed subspace of the middle term, and is itself Fr\'echet.

Next we address the nuclearity. Our arguments are based entirely on the in\-he\-ri\-tance 
properties for nuclearity, which can be found for instance in \cite[Satz 28.6--28.7]{MeVo} and
\cite[Theorem 7.4]{ScWo}. The power series ring $FP_v$ is a direct product of copies of $\C$,
so it is nuclear. Then its subspace $FP_v^{G_v}$ and the finite direct sum $(F_v^{G_v})^r$
with $r \in \N$ inherit nuclearity from $FP_v$. As $FP_v^{G_v} \underset{\mc O 
(\tilde V)^G}{\otimes} M$ is a Hausdorff quotient of $(F_v^{G_v})^r$ for a suitable $r$,
it is nuclear as well.

The Fr\'echet space $C^\infty (V)$ is a standard example of a nuclear space \cite[p. 108]{ScWo}.
Hence the subspace $C^\infty (V)^G$ and $(C^\infty (V)^G)^r$ are also nuclear. We showed that
$C^\infty (V)^G \underset{\mc O (\tilde V)^G}{\otimes} M$ is a Hausdorff quotient of 
$(C^\infty (V)^G)^r$, and therefore nuclear. Finally, nuclearity is inherited by the subspace
$I_{Gv}^{\infty,G} \big( C^\infty (V)^G \underset{\mc O (\tilde V)^G}{\otimes} M \big)$.
\end{proof}

From Lemma \ref{lem:1.8} we obtain a functor 
\begin{equation}\label{eq:1.3}
C^\infty (V)^G \underset{\mc O (\tilde V)^G}{\otimes} :
\mr{Mod}_{fg} \big( \mc O (\tilde V)^G \big) \to \mr{Mod}_{Fr} \big( C^\infty (V)^G \big) ,
\end{equation}
where the subscripts fg and Fr stand for finitely generated and Fr\'echet, respectively.

\begin{lem}\label{lem:1.11}
Assume that (i), (ii) and either (iii) or (iii') from Conditions \ref{cond} hold and let 
$M \subset M'$ be finitely generated $\mc O (V)^G$-modules. Then the natural map
\[
\big( C^\infty (V)^G \underset{\mc O (\tilde V)^G}{\otimes} M \big)^\wedge_{Gv} \longrightarrow
\big( C^\infty (V)^G \underset{\mc O (\tilde V)^G}{\otimes} M' \big)^\wedge_{Gv} 
\]
is injective.
\end{lem}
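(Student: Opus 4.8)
The plan is to reduce the statement to a situation where the already–established results on formal completions apply, namely Lemma \ref{lem:1.3}, and the key comparison between the formal completion of $\mc O(\tilde V)^G$ and that of $C^\infty(V)^G$ guaranteed by Condition (iii) or (iii'). First I would reduce to the case where $M$ is finitely generated: if $N \subset N'$ are finitely generated $\mc O(\tilde V)^G$-submodules with $N \subset M$, $N' \subset M'$ and $N' \cap M = N$, then the formal completion functor $(-)^\wedge_{Gv}$ commutes with filtered colimits (it is a base change along $FP_v^{G_v}$ followed by a quotient by the closure of $I_{Gv}^\infty$-multiples, and tensor products commute with colimits), so injectivity for all such finitely generated pairs yields injectivity in general. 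Hence I assume $M, M'$ finitely generated over the Noetherian ring $\mc O(\tilde V)^G$.

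Next I would identify the kernel of the map in question. Let $Q = M'/M$, a finitely generated $\mc O(\tilde V)^G$-module, and let $A = C^\infty(V)^G \otimes_{\mc O(\tilde V)^G} M'$, with closed submodule $B$ the image of $C^\infty(V)^G \otimes_{\mc O(\tilde V)^G} M$ (closed by Theorem \ref{thm:1.7}, since $M$ is generated by finitely many $G$-invariant regular — hence analytic — functions). Then $A/B \cong C^\infty(V)^G \otimes_{\mc O(\tilde V)^G} Q$, which by Lemma \ref{lem:1.8} is Fréchet. The map in the statement is the induced map $\hat B_{Gv} \to \hat A_{Gv}$ on formal completions, so its kernel is $(B \cap \overline{I_{Gv}^\infty A})/\overline{I_{Gv}^\infty B}$. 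The goal is to show this kernel vanishes; equivalently, that $B \cap \overline{I_{Gv}^\infty A} = \overline{I_{Gv}^\infty B}$, i.e. $B \cap \overline{I_{Gv}^\infty A} \subseteq \overline{I_{Gv}^\infty B}$ (the reverse inclusion being obvious).

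To do this I would exploit the algebraic input. On the algebraic side, the formal completion $FP_v^{G_v} \otimes_{\mc O(\tilde V)^G} (-)$ is exact (flatness of completion over a Noetherian ring), so $FP_v^{G_v} \otimes M \to FP_v^{G_v} \otimes M'$ is injective; here Condition (iii)/(iii') is what identifies $FP_v^{G_v}$ with the genuine formal power series ring $FP_v^{G_v}$ of $V$ at $v$ and makes $\mc O(\tilde V)^G_{\to v}$ embed in it. The strategy is then to apply Lemma \ref{lem:1.3} to the finitely generated Fréchet module $A$: take $M_1 = B + \overline{I_{Gv}^\infty A}$ and $M_2 = \overline{I_{Gv}^\infty B} + (\text{something closed})$ — more precisely, one shows that $B' := \overline{I_{Gw}^\infty A} \cap B$ and $B'' := \overline{I_{Gw}^\infty B}$ (for a fixed $w$) have the same formal completion at every $v \in V$: at $v \notin Gw$ this is immediate since $I_{Gw}^\infty$ becomes the whole ring locally near $v$, while at $v \in Gw$ one uses the algebraic injectivity $FP_v^{G_v} \otimes M \hookrightarrow FP_v^{G_v} \otimes M'$ together with $\hat B_{Gv} = FP_v^{G_v} \otimes_{\mc O(\tilde V)^G} M$ to see both completions agree with $\hat B_{Gv}$. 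Lemma \ref{lem:1.3} then forces $B' = B''$, which is exactly the inclusion we need, ranging over all $w$; taking $w$ with $v \in Gw$ gives the kernel at $v$ is zero.

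The main obstacle I anticipate is the bookkeeping in the last step: carefully matching the closures $\overline{I_{Gv}^\infty(\cdot)}$ appearing in the topological setting with the purely algebraic tensor products, and checking that the hypotheses of Lemma \ref{lem:1.3} (finite generation, closedness of the submodules involved) are genuinely met — closedness of $\overline{I_{Gv}^\infty A} \cap B$ is automatic, but one must be sure the relevant modules are closed and that Theorem \ref{thm:1.7} applies to produce $B$ as a closed submodule. The interplay between "completion = quotient by closure of flat functions" on the smooth side and "completion = honest inverse limit, exact" on the algebraic side is where the argument has to be handled with care, and Condition (iii)/(iii') is the hypothesis that makes the two sides compatible.
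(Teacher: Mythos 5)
Your proposal has two genuine gaps, one conceptual and one logical.

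First, the conceptual gap: you assert that ``the formal completion $FP_v^{G_v} \otimes_{\mc O (\tilde V)^G} (-)$ is exact (flatness of completion over a Noetherian ring).'' But $FP_v^{G_v}$ is \emph{not} the $I_{Gv}$-adic completion of $\mc O (\tilde V)^G$; that completion is $\widetilde{FP}_v^{G_v}$, the formal completion of the \emph{algebraic} ring at $v$. The standard flatness-of-completion result gives flatness of $\widetilde{FP}_v^{G_v}$ over $\mc O (\tilde V)^G$, nothing more. The whole content of Lemma~\ref{lem:1.11} is to pass from $\widetilde{FP}_v^{G_v}$ to the larger ring $FP_v^{G_v}$, and this is exactly where conditions (iii) and (iii') enter with two quite different mechanisms. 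The paper's argument hinges on the base-change identity \eqref{eq:1.12}, which rewrites the smooth formal completion as $FP_v^{G_v} \otimes_{\widetilde{FP}_v^{G_v}} \hat M_{Gv}$; after that, under (iii) one has $FP_v^{G_v} = \widetilde{FP}_v^{G_v}$ so there is nothing to prove, whereas under (iii') (where $G_v = 1$ but the tangent spaces genuinely differ) one must \emph{prove} flatness of $FP_v$ over $\widetilde{FP}_v$, which the paper does by splitting $T_v(V)\otimes_\R \C = T_v(\tilde V) \oplus T$ and realizing $FP_v$ as the completion of a free $\widetilde{FP}_v$-module. You appear to treat (iii) and (iii') as interchangeable ways of ``identifying $FP_v^{G_v}$ with the formal power series ring,'' but under (iii') that identification simply fails, and the example following Lemma~\ref{lem:1.11} in the paper ($V = \tilde V = \C$ with $G = \{\pm 1\}$) shows that without the freeness hypothesis in (iii') flatness can genuinely break down. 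So your proposal as written would not survive case (iii').

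Second, there is a logical issue: you identify ``the map in the statement'' with $\hat B_{Gv} \to \hat A_{Gv}$, where $B$ is the \emph{image} of $C^\infty (V)^G \otimes_{\mc O (\tilde V)^G} M$ in $A$. But the statement is about the formal completion of $C^\infty (V)^G \otimes_{\mc O (\tilde V)^G} M$ itself, which surjects onto $\hat B_{Gv}$ but need not coincide with it unless the map $C^\infty(V)^G \otimes M \to A$ is already injective --- which is (part of) what the eventual flatness Theorem~\ref{thm:1.1} is trying to establish. Assuming this identity builds in circularity. Relatedly, the detour through Lemma~\ref{lem:1.3} is misplaced: Lemma~\ref{lem:1.3} is the tool that \emph{globalizes} information from all formal completions, and the paper applies it only in Theorem~\ref{thm:1.1}. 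Lemma~\ref{lem:1.11} is a purely local statement about a single point $Gv$, and the paper proves it directly via \eqref{eq:1.12} plus a flatness computation, with no appeal to Lemma~\ref{lem:1.3} at all.
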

\begin{proof}
Recall that the formal completion of the $\mc O (\tilde V)^G$-module $M$ at $Gv \in V /G$ 
is defined as
\begin{equation}\label{eq:1.9}
\hat M_{Gv} = \varprojlim\nolimits_n M / \big( I_{Gv}^n \cap \mc O (\tilde V)^G \big) M. 
\end{equation}
Let $\widetilde{FP}_v$ be the formal completion of $\mc O (\tilde V)$ at $v \in V$. Like 
in \eqref{eq:2.1}, $\widetilde{FP}_v^{G_v}$ is the formal completion of $\mc O (\tilde V)^G$ 
at $Gv$, and it can be considered as a subalgebra of $FP_v^{G_v}$. Since $M$ is finitely 
generated, there is a natural isomorphism
\[
\hat M_{Gv} \cong \widetilde{FP}_v^{G_v} \underset{\mc O (\tilde V)^G}{\otimes} M .
\]
There are isomorphisms of $FP_v^{G_v}$-modules
\begin{equation}\label{eq:1.12}
\begin{aligned}
\big( C^\infty (V)^G \underset{\mc O (\tilde V)^G}{\otimes} M \big)^\wedge_{Gv} & = 
FP_v^{G_v} \underset{C^\infty (V)^G}{\hat{\otimes}} C^\infty (V)^G \underset{\mc O 
(\tilde V)^G}{\otimes} M = FP_v^{G_v} \underset{\mc O (\tilde V)^G}{\otimes} M \\
& \cong FP_v^{G_v} \underset{\widetilde{FP}_v^{G_v}}{\otimes} \widetilde{FP}_v^{G_v}  
\underset{\mc O (\tilde V)^G}{\otimes} M \cong
FP_v^{G_v} \underset{\widetilde{FP}_v^{G_v}}{\otimes} \hat M_{Gv} .
\end{aligned}
\end{equation}
By the exactness of the formal completion functor \eqref{eq:1.9} for finitely generated 
$\mc O (\tilde V)^G$-modules,
\begin{equation}\label{eq:1.13}
\hat M_{Gv} \text{ is a } \widetilde{FP}_v^{G_v}\text{-submodule of } \hat{M'}_{Gv}. 
\end{equation}
Suppose that (iii) holds. Then $FP_v \cong \widetilde{FP_v}$ as $G_v$-representations, so 
$FP_v^{G_v} \cong \widetilde{FP}_v^{G_v}$. With the isomorphism \eqref{eq:1.12}
that immediately implies the statement.

Suppose that (iii') holds, so that $G_v = 1$. The canonical surjection
\[
T_v (V) \otimes_\R \C \to T_v (\tilde V)
\]
induces an injection
\[
j : T_v (\tilde V)^* \to ( T_v (V) \otimes_\R \C)^* 
\]
Pick a basis $\{z_1,\ldots,z_d\}$ of $j (T_v (\tilde V)^*)$ and extend it with elements  
$\{w_1,\ldots,w_{\dim V - d} \}$ to a basis of $( T_v (V) \otimes_\R \C)^*$. 
There are isomorphisms of Fr\'echet algebras
\begin{equation}\label{eq:1.11}
FP_v \cong \C [[z_1,\ldots,z_d,w_1,\ldots,w_{\dim V - d}]] \cong 
\widetilde{FP}_v [[w_1,\ldots,w_{\dim V - d}]] .
\end{equation}
Thus the $\widetilde{FP}_v$-module $FP_v$ is isomorphic to a product of copies of
$\widetilde{FP}_v$, indexed by all the monomials built from $\{w_1,\ldots,w_{\dim V - d} \}$.
Furthermore $\widetilde{FP}_v$ is Noetherian, and then \cite[Theorem 2.2]{Cha} says that
$FP_v$ is flat over $\widetilde{FP}_v$. Combine that with \eqref{eq:1.12} and \eqref{eq:1.13}.
\end{proof}

We note that Lemma \ref{lem:1.11} may become false if we assume only (i), (ii) and (iii') in a weaker
version without the freedom of the $G$-action. For example, take $V = \tilde V = \C$, on which 
$G = \{1,-1\}$ acts by multiplication. For $v = 0$ we have 
\[
\widetilde{FP}_v^{G_v} = \C [[z^2]] \quad \text{and} \quad
FP_v^{G_v} = \C[[z,\bar{z}]]^G = \C[[z^2,\bar{z}^2, z \bar{z}]] .
\]
Here $FP_v^{G_v}$ is not flat over $\widetilde{FP}_v^{G_v}$, and then we see from \eqref{eq:1.12}
that Lemma \ref{lem:1.11} fails.\\

The main result of this section generalizes the flatness of $C^\infty (\tilde V)$ over 
$\mc O (\tilde V)$. As pointed out in an answer to a question on 
MathOverflow\footnote{mathoverflow.net/questions/226136/is-the-sheaf-of-smooth-functions-flat} that 
case can be shown quickly with results of Malgrange \cite{Mal} about complex analytic functions.

\begin{thm}\label{thm:1.1}
Assume that (i), (ii) and either (iii) or (iii') from Conditions \ref{cond} hold.
Then $C^\infty (V)^G$ is flat as $\mc O (\tilde V)^G$-module.
In particular the functor \eqref{eq:1.3} is exact.
\end{thm}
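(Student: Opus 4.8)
The plan is to reduce flatness of $C^\infty(V)^G$ over $\mc O(\tilde V)^G$ to a statement about finitely generated modules and their inclusions, which can then be detected on formal completions using the lemmas already established. Recall that a module $B$ over a Noetherian ring $R$ is flat if and only if for every inclusion $M \subset M'$ of finitely generated $R$-modules, the induced map $B \otimes_R M \to B \otimes_R M'$ is injective; equivalently, one may take $M' = R^r$ free and $M = N$ an arbitrary submodule. Since $\mc O(\tilde V)^G$ is Noetherian (it is a finitely generated $\C$-algebra, being the ring of invariants of a finitely generated algebra under a finite group), it suffices to verify this injectivity criterion for $B = C^\infty(V)^G$.

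First I would fix an inclusion $M \subset M'$ of finitely generated $\mc O(\tilde V)^G$-modules and consider the natural map
\[
\phi : C^\infty (V)^G \underset{\mc O (\tilde V)^G}{\otimes} M \longrightarrow
C^\infty (V)^G \underset{\mc O (\tilde V)^G}{\otimes} M' .
\]
By Lemma \ref{lem:1.8}, both source and target of $\phi$ are finitely generated Fr\'echet $C^\infty(V)^G$-modules, so the closure of the image and the kernel are well-behaved topologically. The strategy is to show $\ker \phi = 0$ by showing that it has trivial formal completion at every point, and then invoking Lemma \ref{lem:1.3}. For this, I would first note that $\overline{\mr{im}\,\phi}$ and $C^\infty (V)^G \otimes_{\mc O(\tilde V)^G} M'$ both have the same formal completion at every $Gv$: indeed, by Lemma \ref{lem:1.11} applied to $M \subset M'$ (which uses hypothesis (iii) or (iii')), the map on formal completions $(C^\infty(V)^G \otimes M)^\wedge_{Gv} \to (C^\infty(V)^G \otimes M')^\wedge_{Gv}$ is injective, so — using that formal completion is exact in the relevant sense and that completion kills nothing detectable here — the cokernel of $\phi$ has vanishing formal completion; actually I would more directly argue with the short exact sequence and \eqref{eq:1.12} to control $\widehat{\ker\phi}_{Gv}$ and $\widehat{\mr{coker}\,\phi}_{Gv}$ simultaneously.

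Concretely, the cleanest route: reduce to $M' = (\mc O(\tilde V)^G)^r$ and $M = N \subset M'$, so that $\phi$ has target $(C^\infty(V)^G)^r$, and set $M_1 := \overline{\mr{im}\,\phi}$, $M_2 := \mr{im}\,\phi$. We want $\ker\phi = 0$, i.e. $\dim_{C^\infty(V)^G}$-faithfulness; equivalently, since $C^\infty(V)^G \otimes N \twoheadrightarrow \mr{im}\,\phi$, it suffices to show this surjection is injective, which we test on formal completions. By \eqref{eq:1.12}, $(C^\infty(V)^G \otimes N)^\wedge_{Gv} \cong FP_v^{G_v} \otimes_{\widetilde{FP}_v^{G_v}} \hat N_{Gv}$ and similarly for $(C^\infty(V)^G)^r$; under (iii) we have $FP_v^{G_v} \cong \widetilde{FP}_v^{G_v}$, and under (iii') we showed $FP_v^{G_v}$ is flat over $\widetilde{FP}_v^{G_v}$ — in either case $FP_v^{G_v}$ is flat over $\widetilde{FP}_v^{G_v}$. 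Since $\widetilde{FP}_v^{G_v}$ is flat over $\mc O(\tilde V)^G$ (algebraic formal completion of a Noetherian local-type situation along $Gv$), the composite $FP_v^{G_v}$ is flat over $\mc O(\tilde V)^G$. Therefore $FP_v^{G_v} \otimes_{\mc O(\tilde V)^G}(-)$ preserves the injection $N \hookrightarrow (\mc O(\tilde V)^G)^r$, which says exactly that the formal completion at $Gv$ of $\ker\phi$ vanishes, for every $v \in V$. Applying Lemma \ref{lem:1.3} to $M_1 = \overline{\mr{im}\,\phi} \supset M_2 = \mr{im}\,\phi$ — or rather to the closed submodule $\ker\phi \supset 0$ inside the Fr\'echet module $C^\infty(V)^G \otimes N$, both having all formal completions zero — gives $\ker\phi = 0$. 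This proves $\phi$ injective for all such inclusions, hence $C^\infty(V)^G$ is flat over $\mc O(\tilde V)^G$; exactness of the functor \eqref{eq:1.3} is then immediate.

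The main obstacle I anticipate is the bookkeeping around topologies: Lemma \ref{lem:1.3} requires the submodules in question to be \emph{closed} in a finitely generated Fr\'echet module, so I must check that $\ker\phi$ is closed (which follows because $\phi$ is continuous and its target is Hausdorff, both guaranteed by Lemma \ref{lem:1.8}), and I must ensure the identification \eqref{eq:1.12} of the formal completion of the tensor product is compatible with the inclusion-of-modules picture — this is the "one checks readily" step of Lemma \ref{lem:1.11}, and using it carefully for a general submodule $N$ rather than just a quotient presentation is where care is needed. The flatness of the algebraic formal completion $\widetilde{FP}_v^{G_v}$ over $\mc O(\tilde V)^G$ is standard commutative algebra (completion along an ideal of a Noetherian ring is flat), but one should note that $I_{Gv} \cap \mc O(\tilde V)^G$ need not be maximal if $Gv$ is not a single closed point of $\mathrm{Spec}$; this is harmless since flatness of $I$-adic completion holds for any ideal $I$ of a Noetherian ring.
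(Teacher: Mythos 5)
Your overall architecture is the same as the paper's: reduce flatness to injectivity of $\mu : C^\infty(V)^G \otimes_{\mc O(\tilde V)^G} M \to C^\infty(V)^G \otimes_{\mc O(\tilde V)^G} M'$ for finitely generated $M \subset M'$, observe via Lemma \ref{lem:1.8} that the domain is a finitely generated Fr\'echet module, establish via Lemma \ref{lem:1.11} (or by a direct flatness argument, as you do) that the induced map on formal completions $\hat\mu_{Gv}$ is injective, and then invoke Lemma \ref{lem:1.3} with $M_1 = \ker\mu$, $M_2 = 0$. All of that is correct and matches the paper.

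The gap is in the step you dispatch with ``which says exactly that the formal completion at $Gv$ of $\ker\phi$ vanishes.'' It does not say that, at least not directly. Injectivity of $\hat\mu_{Gv}$ means $\ker\big(\hat\mu_{Gv}\big) = 0$, which (by \eqref{eq:1.1}, since formal completion here is a quotient) says that $\ker\mu$ is contained in $\overline{I_{Gv}^\infty \big(C^\infty(V)^G \otimes_{\mc O(\tilde V)^G} M\big)}$. What Lemma \ref{lem:1.3} actually requires is the intrinsic vanishing $\widehat{\ker\mu}_{Gv} = 0$, i.e.\ $\ker\mu = \overline{I_{Gv}^\infty \ker\mu}$ with closure taken inside $\ker\mu$. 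Passing from the first to the second is precisely the exactness of the formal completion functor applied to $0 \to \ker\mu \to C^\infty(V)^G \otimes M \to \cdots$, and the paper explicitly flags this as the problematic point: $\ker\mu$ is a closed Fr\'echet $C^\infty(V)^G$-submodule but is not known to be finitely generated, so the exactness result of \cite[Theorem 2.5]{OpSo} (which you implicitly lean on when you write ``using that formal completion is exact in the relevant sense'') is not available off the shelf. The paper resolves this with a hands-on argument borrowed from \cite[pp.~183--184]{MeTo}: given $m \in \ker\mu$ written as $\sum_j f_j \otimes m_j$ with $f_j \in I_{Gv}^\infty$, one constructs a flat function $\psi \in I_{Gv}^\infty$ dividing all the $f_j$ and a decomposition $\psi = \sum_i \epsilon_i$ with $\epsilon_i/\psi \in I_{Gv}^\infty$, so that $m = \sum_i (\epsilon_i/\psi)\, m$ exhibits $m$ as an element of $\overline{I_{Gv}^\infty \ker\mu}$. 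Your sketch anticipates topological bookkeeping issues but not this one, which is the actual crux of the proof; without it the argument is incomplete.

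One smaller remark: the flatness of $FP_v^{G_v}$ over $\widetilde{FP}_v^{G_v}$ under (iii) or (iii'), and the flatness of $\widetilde{FP}_v^{G_v}$ over $\mc O(\tilde V)^G$, are both fine and are essentially what Lemma \ref{lem:1.11} packages; re-deriving them inline is harmless, but note that the composite flatness you use only gives injectivity of $\hat\mu_{Gv}$, which as explained is not yet the vanishing of $\widehat{\ker\mu}_{Gv}$.
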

\begin{proof}
According to \cite[Proposition 6.1]{Eis}, flatness can be checked by testing it with finitely
generated modules. Let $M \subset M'$ be finitely generated $\mc O (\tilde V)^G$-modules.
We need to show that the natural map
\[
\mu : C^\infty (V)^G \underset{\mc O (\tilde V)^G}{\otimes} M \to 
C^\infty (V)^G \underset{\mc O (\tilde V)^G}{\otimes} M'
\]
is injective. We want to apply Lemma \ref{lem:1.3} inside the domain of $\mu$, which by Lemma 
\ref{lem:1.8} has the right properties. The submodules will be $M_2 = 0$ and $M_1 = \ker (\mu)$, 
which is a closed submodule of the domain because $\mu$ is continuous and $C^\infty (V)^G$-linear. 
Lemma \ref{lem:1.3} yields the desired conclusion $\ker (\mu) = 0$, provided we can check that 
all formal completions of the $C^\infty (V)^G$-module $\ker (\mu)$ are zero.

From Lemma \ref{lem:1.11} we know that $\hat{\mu}_{Gv}$ is injective. We would like to apply 
the exactness of the formal completion functor from \cite[Theorem 2.5]{OpSo} to
\[
0 \to \ker (\mu) \to C^\infty (V)^G \underset{\mc O (\tilde V)^G}{\otimes} M \to 
C^\infty (V)^G \underset{\mc O (\tilde V)^G}{\otimes} M' ,
\]
but unfortunately $\ker (\mu)$ could be a topological vector space of a more general kind than
allowed by \cite[Theorem 2.5]{OpSo}. It turns out that we can still use the proof of
\cite[Theorem 2.5]{OpSo}, which relies on technical constructions in \cite[Chapitre 1]{MeTo}. 
Consider an element of $\widehat{\ker (\mu)}_{Gv}$ represented by $m \in \ker (\mu) \subset
C^\infty (V)^G \otimes M$. By the injectivity of $\hat{\mu}_{Gv}$ and \eqref{eq:1.1}, $m$
belongs to
\[
\overline{I_{Gv}^{\infty,G} \big( C^\infty (V)^G \underset{\mc O (\tilde V)^G}{\otimes} M \big)} .
\]
Here taking the closure is superfluous, for by Lemma \ref{lem:1.8} it is already a closed subspace
of $C^\infty (V)^G \underset{\mc O (\tilde V)^G}{\otimes} M$.
Hence there are finitely many $f_j \in I_{Gv}^{\infty,G}$
and $m_j \in M$ such that $m = \sum_j f_j \otimes m_j$. By \cite[p. 183]{MeTo} there exists 
a $\psi \in I_{Gv}^\infty$ such that 
\[
f_j / \psi \in I_{Gv}^\infty \subset C^\infty (V) \text{ for all } j.
\] 
The construction of $\psi$ in \cite[p. 184]{MeTo} runs via a sequence of functions 
$\epsilon_i \in I_{Gv}^\infty$ such that $\sum_i \epsilon_i = \psi$ and $\epsilon_i / \psi \in 
I_{Gv}^\infty$. We can average all these functions $\epsilon_i$ over $G$, that preserves 
their properties used in \cite{MeTo}. Hence we may assume that all the $\epsilon_i$ are 
$G$-invariant and that $\psi \in I_{Gv}^{\infty,G} \subset C^\infty (V)^G$. Then 
\[
m / \psi = \sum\nolimits_j f_j / \psi \otimes m_j \in 
C^\infty (V)^G \underset{\mc O (\tilde V)^G}{\otimes} M
\] 
is well-defined. By $G$-invariance $(\epsilon_i / \psi) m \in I_{Gv}^{\infty,G} \ker (\mu)$. 
Now we can write
\[
m = \psi \cdot m / \psi = \sum\nolimits_i \epsilon_i \cdot m / \psi = 
\sum\nolimits_i (\epsilon_i / \psi) \cdot m \in \overline{I_{Gv}^{\infty,G} \ker (\mu)} .
\]
The sums converge by the equalities (although $\sum\nolimits_i (\epsilon_i / \psi)$ need not
converge). Hence $m = 0$ in $\widehat{\ker (\mu)}_{Gv}$, and $\widehat{\ker (\mu)}_{Gv} = 0$.
\end{proof}

\section{Finite type algebras and their smooth versions}

We will apply Theorem \ref{thm:1.1} to finite type algebras. By an $\mc O (\tilde V)^G$-algebra
we mean a (not necessarily unital) algebra $A$ together with a unital algebra homomorphism
from $\mc O (\tilde V)^G$ to the centre of the multiplier algebra of $A$. Recall from \cite{KNS}
that $A$ has finite type (as $\mc O (\tilde V)^G$-algebra) if it is finitely generated as
module over $\mc O (\tilde V)^G$. The structure, homology and representation theory of such algebras 
were studied in \cite{KNS}. In particular $A$ is always a polynomial identity algebra.
We want to compare $A$ and
\[
C^\infty (V)^G \underset{\mc O (\tilde V)^G}{\otimes} A .
\]
By Lemma \ref{lem:1.8} this is a Fr\'echet algebra, and it is finitely generated as module
over $C^\infty (V)^G$. It is also a polynomial identity algebra, and  we regard it as a smooth 
version of a finite type algebra.

Assume that $A$ is unital and let $M$ be a finitely generated $A$-module. By \cite[Lemma 3]{KNS} 
it has a resolution $(A \otimes_\C F_*, d_*)$ consisting of finitely generated free $A$-modules.

\begin{lem}\label{lem:1.2}
Assume that (i), (ii) and (iii) or (iii') from Conditions \ref{cond} hold and put
$C_n = C^\infty (V)^G \underset{\mc O (\tilde V)^G}{\otimes} A \underset{\C}{\otimes} F_n$.
\enuma{
\item $(C_n, \mr{id} \otimes d_n)$ is a resolution of the 
\[
C^\infty (V)^G \underset{\mc O (\tilde V)^G}{\otimes} A\text{-module} \quad 
C^\infty (V)^G \underset{\mc O (\tilde V)^G}{\otimes} M 
\]
by finitely generated free modules. 
\item Suppose in addition that $C^\infty (V)^G \underset{\mc O (\tilde V)^G}{\otimes} A$
and $C^\infty (V)^G \underset{\mc O (\tilde V)^G}{\otimes} M$ are isomorphic (as Fr\'echet spaces)
to direct summands of the space of rapidly decreasing sequences $\mc S (\N)$. Then the resolution 
from part (a) is split exact as complex of Fr\'echet spaces.
}
\end{lem}
\begin{proof}
(a) The exactness of 
\begin{equation}\label{eq:2.4}
C_* \to C^\infty (V)^G \underset{\mc O (\tilde V)^G}{\otimes} M
\end{equation}
is a direct consequence of Theorem \ref{thm:1.1}.\\
(b) Let $\mc D$ be the category of Fr\'echet spaces that are isomorphic to direct summands of
$\mc S (\N)$. Recall that $\mc S (\N)^d \cong \mc S (\N)$ for all $d \in \N$. Hence $C_n$ belongs
to $\mc D$ and \eqref{eq:2.4} is an exact sequence in $\mc D$. By \cite[Theorems 1.8 and 5.1]{Vog},
every exact sequence in $\mc D$ admits a continuous linear splitting. 
\end{proof}

\noindent We turn to a comparison of the Hochschild homologies of $A$ and of 
$C^\infty (V)^G \underset{\mc O (\tilde V)^G}{\otimes} A$. For a unital finite type algebra $A$
and an $A$-bimodule $M$, this can be defined as
\[
H_n (A,M) = \mr{Tor}_n^{A \otimes A^{op}} (A,M) ,
\] 
see \cite[Proposition 1.1.13]{Lod}. The special case $M = A$ is by definition the Hochschild homology
$HH_n (A)$. 

For Fr\'echet algebras like 
$C^\infty (V)^G \underset{\mc O (\tilde V)^G}{\otimes} A$, the topology must be taken into 
account. This is done best by fixing a (completed) topological tensor product and building all
differential complexes with respect to this tensor product, see for instance \cite{Tay}.

We do it slightly differently though, with bornologies and bornological modules \cite[\S 2]{Mey1}. This 
approach has the advantage that both $A$ and $C^\infty (V)^G \underset{\mc O (\tilde V)^G}{\otimes} A$ 
can be regarded as complete bornological algebras. For $A$ it boils down to the standard purely 
algebraic setup, while for Fr\'echet algebras/modules the bornological structure is equivalent 
to the topological structure. The appropriate tensor product is the complete bornological tensor product 
$\hat \otimes$, which for Fr\'echet spaces agrees with the complete projective tensor product 
\cite[Theorem I.87]{Mey2}. By default we endow all finitely generated $\mc O (\tilde V)^G$-modules with 
the fine bornology \cite[\S 2.1]{Mey1}, so that complete bornological tensor products also make sense 
for them (and they agree with the algebraic tensor products).

The category of bornological modules of a complete bornological algebra $B$ is made into an exact 
category by allowing only extensions of $B$-modules that are split as extensions of bornological 
vector spaces. For extensions of Fr\'echet $B$-modules, this just means that they must be split 
as extensions of Fr\'echet spaces. It was checked in \cite[\S 3]{Mey1} that this is an excellent 
setting for homological algebra. 

Assume that $B$ is unital, and let $N$ be a bornological $B$-bimodule.
A good definition of the Hochschild homology of $B$ with coefficients in $N$ is
\begin{equation}\label{eq:1.14}
H_n (B,N) = \mr{Tor}_n^{B \hat \otimes B^{op}} (B,N),
\end{equation}
in the exact category of bornological $B$-modules. For $N = B$ this yields the Hochschild homology
$HH_n (B)$. For Fr\'echet algebras and modules, \eqref{eq:1.14} agrees with the definition in terms
of the completed projective tensor product \cite{Tay}.

From \eqref{eq:1.14} we see that we will have to consider some modules over 
\[
C^\infty (V)^G \hat \otimes C^\infty (V)^G \cong C^\infty (V \times V)^{G \times G} .
\] 

\begin{lem}\label{lem:1.12}
Suppose that (i), (ii) and (iii) from Conditions \ref{cond} hold. 
\enuma{
\item $\mc O (\tilde V)^G$ is dense in $C^\infty (V)^G$.
\item For any finitely generated $\mc O (\tilde V)^G$-bimodule $M$, 
there is a natural isomorphism of $C^\infty (V)^G$-modules
\[
C^\infty (V)^G \underset{\mc O (\tilde V)^G}{\otimes} M \to C^\infty (V)^G 
\underset{\mc O (\tilde V)^G}{\hat \otimes} M \underset{\mc O (\tilde V)^G}{\hat \otimes} 
C^\infty (V)^G : f \otimes m \mapsto f \otimes m \otimes 1 .
\]
When $M$ is an $\mc O (\tilde V)^G$-algebra, this map is an algebra isomorphism.
}
\end{lem}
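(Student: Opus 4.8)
\textbf{Proof proposal for Lemma \ref{lem:1.12}.}

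For part (a), the plan is to reduce density to a statement about $G$-orbits of points. By the Stone--Weierstrass theorem the algebra generated by $\mc O(\tilde V)$ together with complex conjugates of its elements is dense in $C^\infty(V)$ with its Fr\'echet topology --- more precisely, condition (i) guarantees that $\mc O(\tilde V)$ separates points of $V$, and condition (iii) guarantees that at each $v$ the differentials of elements of $\mc O(\tilde V)$ span $T_v(V)^*$, so a Whitney-type approximation argument applies. However, we only want $\mc O(\tilde V)$ itself (not its conjugates). Here is where (iii) is doing real work: the first point to make is that $\mc O(\tilde V)$ is already dense in $C^\infty(V)$, which can be extracted from the flatness machinery of Section 1, or more directly from the fact that the Taylor series map $\mc O(\tilde V) \to \widetilde{FP}_v \cong FP_v$ is ``dense enough'' at every point. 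Then applying the idempotent $p_G = |G|^{-1}\sum_{g\in G} g$, which is continuous on $C^\infty(V)$ and maps $\mc O(\tilde V)$ onto $\mc O(\tilde V)^G$ and $C^\infty(V)$ onto $C^\infty(V)^G$, density of $\mc O(\tilde V)$ in $C^\infty(V)$ passes to density of $\mc O(\tilde V)^G$ in $C^\infty(V)^G$. I expect the cleanest route is: cite or reprove density of $\mc O(\tilde V)$ in $C^\infty(V)$ (this is essentially the ungraded, $G$-trivial case and is surely standard given (i), (iii)), then push through $p_G$.

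For part (b), the natural map $f \otimes a \mapsto f \otimes a \otimes 1$ is visibly a well-defined algebra homomorphism once one notes that the right-hand side $C^\infty(V)^G \mathbin{\hat\otimes}_{\mc O(\tilde V)^G} A \mathbin{\hat\otimes}_{\mc O(\tilde V)^G} C^\infty(V)^G$ is a completed tensor product with $A$ a finite (hence fine bornological) $\mc O(\tilde V)^G$-module in the middle. The first step is to identify both sides as $C^\infty(V)^G$-bimodules (equivalently, as modules over $C^\infty(V)^G \hat\otimes C^\infty(V)^G \cong C^\infty(V\times V)^{G\times G}$) and to check the map respects this structure. The second step is to produce an inverse: the candidate is $f \otimes a \otimes f' \mapsto (ff')\otimes a$, which makes sense because $A$ is central over $\mc O(\tilde V)^G$, so the two $C^\infty(V)^G$-actions on $A$ coming from the outer tensor factors are compatible after multiplying the two scalar functions together. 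The main subtlety --- and the step I expect to be the real obstacle --- is verifying that this inverse is \emph{continuous} (bounded) for the completed bornological tensor product, i.e. that the multiplication map $C^\infty(V)^G \mathbin{\hat\otimes} C^\infty(V)^G \to C^\infty(V)^G$ together with the finiteness of $A$ lets one factor $C^\infty(V)^G \mathbin{\hat\otimes}_{\mc O(\tilde V)^G} A \mathbin{\hat\otimes}_{\mc O(\tilde V)^G} C^\infty(V)^G$ through the single tensor product $C^\infty(V)^G \otimes_{\mc O(\tilde V)^G} A$, which by Lemma \ref{lem:1.8} is already Fr\'echet (so no further completion is needed on that side).

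To handle that obstacle, the plan is: since $A$ is finitely generated over $\mc O(\tilde V)^G$, pick generators $a_1,\dots,a_k$; then any element of the triple completed tensor product is a convergent (bornological) sum $\sum_i f_i \otimes a_i \otimes g_i$ with $(f_i), (g_i)$ in suitable bounded sets, and the inverse sends this to $\sum_i (f_i g_i) \otimes a_i$, which converges in the Fr\'echet space $C^\infty(V)^G \otimes_{\mc O(\tilde V)^G} A$ because pointwise multiplication $C^\infty(V)^G \times C^\infty(V)^G \to C^\infty(V)^G$ is jointly continuous and the $\mc O(\tilde V)^G$-relations among the $a_i$ are respected on both sides. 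Finally one checks the two composites are the identity on elementary tensors, which by density/continuity forces them to be the identity everywhere, and that the forward map is an algebra homomorphism by checking on elementary tensors. The use of part (a) enters if one wants to reduce checking identities from all elements to a dense subalgebra, but it may be avoidable by working directly with generators of $A$.
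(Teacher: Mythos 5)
Your plan for part (a) is essentially the paper's: reduce to showing $\mc O(\tilde V)$ is dense in $C^\infty(V)$, then apply the continuous idempotent $p_G$. However, the Stone--Weierstrass detour is a distraction (it concerns uniform approximation on compacta by conjugate-closed algebras and would at best give density of the algebra generated by $\mc O(\tilde V)$ \emph{and its conjugates}, which you correctly note is not what is wanted), and when you come to the actual step you need --- density of $\mc O(\tilde V)$ itself --- you gesture at the Taylor series map being ``dense enough'' but do not name the criterion that closes the argument. The paper invokes \cite[Corollaire V.1.6]{Tou}: if the image of a submodule in each formal completion $FP_v$ is dense (here even equal, since condition (iii) gives $\widetilde{FP}_v \cong FP_v$), then the submodule is dense. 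Without citing a result of this flavour your argument for (a) remains a gap.

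For part (b) you take a genuinely different route from the paper and, as you anticipate, the boundedness of your inverse is the real obstacle --- and your sketch does not close it. The paper's proof is: by Lemma \ref{lem:1.8} applied to $\tilde V \times \tilde V$, the triple completed tensor product is Fr\'echet, hence Hausdorff; then for $x \in C^\infty(V)^G \otimes_{\mc O(\tilde V)^G} A$ and $f \in C^\infty(V)^G$, choosing $f_n \in \mc O(\tilde V)^G$ with $f_n \to f$ (part (a)!) gives $x \otimes f = \lim x \otimes f_n = \lim x f_n \otimes 1 = xf \otimes 1$, which shows the $\hat\otimes_{\mc O(\tilde V)^G}$ on the right factor is in fact an $\hat\otimes_{C^\infty(V)^G}$ and hence collapses. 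This makes essential use of part (a), whereas you suggest (a) ``may be avoidable'' --- that is a real divergence in strategy. Your alternative, expanding a general element of the triple tensor product as a convergent sum $\sum f_i \otimes a_i \otimes g_i$ over a finite generating set of $A$ with $(f_i), (g_i)$ bounded, is not automatic from the definition of the completed bornological tensor product, and the convergence of $\sum (f_i g_i) \otimes a_i$ in $C^\infty(V)^G \otimes_{\mc O(\tilde V)^G} A$ needs more than ``multiplication is jointly continuous.'' A rigorous version of your idea is available by choosing a presentation $(\mc O(\tilde V)^G)^k \twoheadrightarrow A$, observing that the induced bounded surjection $(C^\infty(V)^G \hat\otimes C^\infty(V)^G)^k \to C^\infty(V)^G \hat\otimes_{\mc O(\tilde V)^G} A \hat\otimes_{\mc O(\tilde V)^G} C^\infty(V)^G$ and the bounded multiplication $C^\infty(V)^G \hat\otimes C^\infty(V)^G \to C^\infty(V)^G$ together descend to the desired inverse --- but this was not spelled out, and the paper's density argument is considerably cleaner and shorter.
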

\begin{proof}
(a) It suffices to show that $\mc O (\tilde V)$ is dense in $C^\infty (V)$, because from that we
can obtain the statement by applying the idempotent $p_G$. For any $v \in V$, (iii) yields
a natural isomorphism 
\[
\widehat{\mc O (\tilde V)}_v = \widetilde{FP}_v \cong FP_v = \widehat{C^\infty (V)}_v .
\]
According to \cite[Corollaire V.1.6]{Tou} this implies that the closure of 
$\mc O (\tilde V)$ in $C^\infty (V)$ is $C^\infty (V)$.\\
(b) By Lemma \ref{lem:1.8} (applied to $\tilde V \times \tilde V$ with the $G \times G$-action),
\begin{equation}\label{eq:1.18}
\big( C^\infty (V)^G \hat \otimes C^\infty (V)^G \big) \underset{\mc O (\tilde V)^G \otimes 
\mc O (\tilde V)^G}{\otimes} M = C^\infty (V)^G \underset{\mc O (\tilde V)^G}{\hat \otimes} M
\underset{\mc O (\tilde V)^G}{\hat \otimes} C^\infty (V)^G 
\end{equation}
is a Fr\'echet space. Let $x \in C^\infty (V)^G \underset{\mc O (\tilde V)^G}{\otimes} M$ and
$f \in C^\infty (V)^G$. By part (a) there exists a sequence $(f_n )_{n=1}^\infty$ in 
$\mc O (\tilde V)^G$ converging to $f$. The space \eqref{eq:1.18} is Hausdorff, so limits
are unique in there and we can compute
\[
x \otimes f = \lim_{n \to \infty} x \otimes f_n = 
\lim_{n \to \infty} x f_n \otimes 1 = x f \otimes 1 .
\]
Consequently \eqref{eq:1.18} equals 
\begin{equation}\label{eq:1.19}
C^\infty (V)^G \underset{\mc O (\tilde V)^G}{\hat \otimes} M \underset{C^\infty (V)^G}{\hat 
\otimes} C^\infty (V)^G = C^\infty (V)^G \underset{\mc O (\tilde V)^G}{\hat \otimes} M .
\end{equation}
Since $C^\infty (V)^G \underset{\mc O (\tilde V)^G}{\otimes} M$ already is Fr\'echet (by
Lemma \ref{lem:1.8}), it equals the right hand side of \eqref{eq:1.19}. It is easy to see
that this isomorphism of Fr\'echet $C^\infty (V)^G$-modules is given by the map in the statement.
 
When $M$ is an addition an $\mc O (\tilde V)^G$-algebra, the map in the statement is also an 
algebra homomorphism, so in fact an algebra isomorphism.
\end{proof}

Lemmas \ref{lem:1.2} and \ref{lem:1.12} together say that, under the topological condition from
Lemma \ref{lem:1.2}.b, the embedding of bornological algebras 
\[
A \to C^\infty (V)^G \underset{\mc O (\tilde V)^G}{\otimes} A
\] is a homological epimorphism. That implies several 
comparison results for homological properties of the derived module categories of the two involved 
algebras, see \cite[Theorem 35]{Mey1} (where this is called an isocohomological embedding). 

Since the Fr\'echet space $C^\infty (V)^G$ is isomorphic to a direct summand of 
$\mc S (\N)$ when $V$ is compact \cite[Satz 31.16]{MeVo}, it seems likely that in many cases 
$C^\infty (V)^G \underset{\mc O (\tilde V)^G}{\otimes} A$ has the same property. Proving that
is another matter though. Fortunately, we can work around the existence of continuous linear 
splittings of our resolutions by involving properties of nuclear Fr\'echet spaces.

One can compute $H_n (B,N)$ (at least when $B$ is unital) with a completed version of the standard
bar-resolution of $B$ \cite[\S 1]{Lod}, but the definition as a derived functor is more flexible.
The inclusion $A \to C^\infty (V)^G \underset{\mc O (\tilde V)^G}{\otimes} A$ induces a chain
map between the respective bar-resolutions, and hence induces a natural map
\begin{equation}\label{eq:1.15}
H_n (A,M) \to H_n \Big( C^\infty (V)^G \underset{\mc O (\tilde V)^G}{\otimes} A , 
C^\infty (V)^G \underset{\mc O (\tilde V)^G}{\otimes} M \Big) .
\end{equation}
Notice that by Lemma \ref{lem:1.12}.b $C^\infty (V)^G \underset{\mc O (\tilde V)^G}{\otimes} M$ is
a Fr\'echet $C^\infty (V)^G \underset{\mc O (\tilde V)^G}{\otimes} A$-bimodule, so the right 
hand side of \eqref{eq:1.15} is defined.

\begin{thm}\label{thm:1.10}
Let $A$ be unital and let $M$ be a finitely generated $A$-bimodule.
Assume that (i), (ii) and (iii) from Conditions \ref{cond} are fulfilled. 
Then \eqref{eq:1.15} induces a natural isomorphism of Fr\'echet $C^\infty (V)^G$-modules
\[
C^\infty (V)^G \underset{\mc O (\tilde V)^G}{\otimes} H_n (A,M) \to 
H_n \Big( C^\infty (V)^G \underset{\mc O (\tilde V)^G}{\otimes} A ,
C^\infty (V)^G \underset{\mc O (\tilde V)^G}{\otimes} M \Big) .
\]
\end{thm}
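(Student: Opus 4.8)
The plan is to compute both Hochschild homologies via resolutions and then invoke the flatness of $C^\infty(V)^G$ over $\mc O(\tilde V)^G$ (Theorem \ref{thm:1.1}) to move the tensor factor across the $\mr{Tor}$. First I would fix a resolution of $A$ as a module over the enveloping algebra $A^e = A \otimes A^{op}$: since $A$ is a finite type $\mc O(\tilde V)^G$-algebra, \cite[Lemma 3]{KNS} gives a resolution $(A^e \otimes_\C W_*, \partial_*)$ of $A$ by finitely generated free $A^e$-modules, so that $HH_n(A)$ is the homology of $(A \otimes_\C W_*, \cdot)$. Applying the functor $C^\infty(V)^G \otimes_{\mc O(\tilde V)^G} -$, which is exact by Theorem \ref{thm:1.1}, one gets that $C^\infty(V)^G \otimes_{\mc O(\tilde V)^G} HH_n(A)$ is the homology of the complex $C^\infty(V)^G \otimes_{\mc O(\tilde V)^G} A \otimes_\C W_*$.

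Next I would produce the corresponding object on the smooth side. Write $B = C^\infty(V)^G \otimes_{\mc O(\tilde V)^G} A$; by Lemma \ref{lem:1.8} this is a Fr\'echet algebra, finitely generated over $C^\infty(V)^G$. Applying the exact functor to the $A^e$-resolution of $A$ and using Lemma \ref{lem:1.2} (with the roles played by $\tilde V \times \tilde V$, $G \times G$, $A^e$, and $M = A$), the complex $(B \hat\otimes B^{op} \hat\otimes_{A^e} (A^e \otimes_\C W_*), \cdot)$ — which after simplification is $(B^e \otimes_\C W_*, \cdot)$ — is a resolution of $B$ by finitely generated free $B^e$-modules, split exact as a complex of Fr\'echet spaces. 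Here the key point is the identification $B \hat\otimes B^{op} \cong (C^\infty(V)^G \hat\otimes C^\infty(V)^G) \otimes_{\mc O(\tilde V)^G \otimes \mc O(\tilde V)^G} A^e$, which is exactly the content of Lemma \ref{lem:1.12}(b) applied to the doubled variety. Because this resolution consists of finitely generated free $B^e$-modules and is split in the bornological (equivalently topological, for Fr\'echet) category, it is admissible for computing $\mr{Tor}^{B^e}$ in the exact category of bornological $B^e$-modules, so $HH_n(B)$ is the homology of $B \hat\otimes_{B^e}(B^e \otimes_\C W_*) = B \otimes_\C W_*$ — and this last complex is precisely $C^\infty(V)^G \otimes_{\mc O(\tilde V)^G} A \otimes_\C W_* = C^\infty(V)^G \otimes_{\mc O(\tilde V)^G}(A \otimes_\C W_*)$, the same complex as in the previous paragraph. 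Taking homology of a fixed complex of Fr\'echet spaces commutes with the exact functor $C^\infty(V)^G \otimes_{\mc O(\tilde V)^G} -$ (again by Theorem \ref{thm:1.1}, and noting that the differentials have closed image by Lemma \ref{lem:1.2} so the subquotients are Fr\'echet), which gives the desired isomorphism, and one checks it is the map induced by \eqref{eq:1.15} by comparing with the bar-resolutions.

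The main obstacle I anticipate is purely bookkeeping of a homological-algebra nature rather than anything deep: one must verify carefully that the finitely generated free $B^e$-resolution built from $(A^e \otimes_\C W_*, \partial_*)$ is genuinely admissible in Meyer's exact category of bornological modules, i.e.\ that the tensored complex is split exact as a complex of bornological (Fr\'echet) spaces — this is where Lemma \ref{lem:1.2} is indispensable, since abstract exactness is not enough in the bornological setting. A secondary subtlety is that $HH_n(B)$ a priori involves the completed tensor product $B \hat\otimes_{B^e} -$, whereas after passing to the free resolution the relevant tensor products are over $\C$ with a finite-dimensional space $W_n$, so $\hat\otimes$ collapses to the algebraic $\otimes$ and no completion issue remains; this needs to be stated but is routine. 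Finally, naturality and the identification of the constructed isomorphism with the map induced by the inclusion $A \hookrightarrow B$ follow from functoriality of the bar-complex and the comparison theorem for projective resolutions, exactly as in the algebraic prototype in \cite{KNS}.
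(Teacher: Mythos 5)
Your proposal follows essentially the same route as the paper: resolve $A$ over $A^e := A \otimes A^{op}$ via \cite[Lemma 3]{KNS}, base-change to the smooth side using Lemma \ref{lem:1.2} over the doubled variety, invoke Lemma \ref{lem:1.12} and Theorem \ref{thm:1.1}, and compare homologies. The key lemmas are correctly identified. There is, however, one misplacement that hides the single genuinely non-formal step.

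You cite Lemma \ref{lem:1.12}(b) for the identification $B \hat\otimes B^{op} \cong \big( C^\infty (V)^G \hat\otimes C^\infty (V)^G \big) \underset{\mc O (\tilde V)^G \otimes \mc O (\tilde V)^G}{\otimes} A^e$. That identification is in fact just associativity and commutativity of the complete bornological tensor product (the paper cites \cite[\S 2.1]{Mey1} for it); it is not the content of Lemma \ref{lem:1.12}(b). The place where Lemma \ref{lem:1.12}(b) is actually indispensable is the step you pass over: Lemma \ref{lem:1.2} applied with $M = A$ over $\tilde V \times \tilde V$ produces a $B^e$-free resolution of $\big( C^\infty (V)^G \hat\otimes C^\infty (V)^G \big) \underset{\mc O (\tilde V)^G \otimes \mc O (\tilde V)^G}{\otimes} A = C^\infty (V)^G \underset{\mc O (\tilde V)^G}{\hat\otimes} A \underset{\mc O (\tilde V)^G}{\hat\otimes} C^\infty (V)^G$, and it is Lemma \ref{lem:1.12}(b) (via the density statement in part (a)) that identifies this module with $B$ itself. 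Without that, your resolution computes $\mr{Tor}^{B^e}_n\!\big(B, \, C^\infty (V)^G \hat\otimes_{\mc O (\tilde V)^G} A \hat\otimes_{\mc O (\tilde V)^G} C^\infty (V)^G\big)$ rather than $HH_n(B) = \mr{Tor}^{B^e}_n(B,B)$, and there is no \emph{a priori} reason these agree --- already in the purely algebraic setting $S \otimes_R A \otimes_R S \neq S \otimes_R A$ for a flat $R$-algebra $S$. So the assertion ``is a resolution of $B$'' in your second paragraph is exactly where the missing justification lives.

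A smaller inaccuracy: you invoke Lemma \ref{lem:1.2} to conclude that the differentials in the Hochschild complex $B \otimes_\C W_*$ have closed image. Lemma \ref{lem:1.2} only gives a topological splitting of the acyclic resolution $(C_*, \mr{id}\otimes d_*)$ of $B$, not of the complex obtained after applying $B \otimes_{B^e} -$. The paper instead first establishes the abstract isomorphism $H_n(B\otimes V_*) \cong C^\infty(V)^G \otimes_{\mc O(\tilde V)^G} HH_n(A)$, observes that the right-hand side is Fr\'echet (using \cite[Corollary 1]{KNS} and Lemma \ref{lem:1.8}) hence Hausdorff, and deduces from that Hausdorffness that $\mr{im}(d_{n+1})$ is closed. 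Your argument for this last point would need to be replaced by something along those lines.
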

\begin{proof}
The algebra $A \otimes A^{op}$ is of finite type over $\mc O (\tilde V)^G \otimes 
\mc O (\tilde V)^G$. Hence \cite[Lemma 3]{KNS} applies to it, and yields a 
resolution $(A \otimes A^{op} \otimes F_*,d_*)$ of $A$ by finitely generated free
$A \otimes A^{op}$-modules. By definition
\begin{equation}\label{eq:1.16}
H_n (A,M) = H_n \big( (A \otimes A^{op}) \otimes F_* \underset{A \otimes A^{op}}{\otimes} M, 
d_* \otimes \mr{id} \big) = H_n (F_* \otimes M, d_*) .
\end{equation}
We note that by \cite[Proposition 2 and Corollary 1]{KNS} $H_n (A,M)$ is a finitely generated 
$\mc O (\tilde V)^G$-module, so applying $C^\infty (V)^G \underset{\mc O (\tilde V)^G}{\otimes}$
to it yields a Fr\'echet $C^\infty (V)^G$-module (Lemma \ref{lem:1.8}). We abbreviate 
\[
B = C^\infty (V)^G \underset{\mc O (\tilde V)^G}{\otimes} A \qquad \text{and} \qquad
N = C^\infty (V)^G \underset{\mc O (\tilde V)^G}{\otimes} M .
\] 
By the associativity of $\hat \otimes$ \cite[\S 2.1]{Mey1} there is a natural algebra isomorphism
\[
\big( C^\infty (V)^G \hat \otimes C^\infty (V)^G \big) \underset{\mc O (\tilde V)^G \otimes 
\mc O (\tilde V)^G}{\otimes} (A \otimes A^{op}) \cong B \hat \otimes B^{op} .
\]
Using that we put
\[
C_n = \big( C^\infty (V)^G \hat \otimes C^\infty (V)^G \big) \underset{\mc O (\tilde V)^G 
\otimes \mc O (\tilde V)^G}{\otimes} (A \otimes A^{op}) \otimes F_n \cong 
B \hat \otimes B^{op} \otimes F_n .
\]
Then Lemma \ref{lem:1.2} says that $(C_*,d_*)$ is a finitely generated free 
$B \hat{\otimes} B^{op}$-resolution of 
\begin{equation}\label{eq:1.20}
C^\infty (V)^G \underset{\mc O (\tilde V)^G}{\hat \otimes} A 
\underset{\mc O (\tilde V)^G}{\hat \otimes} C^\infty (V)^G .
\end{equation}
We warn that this resolution need not be split in the category of Fr\'echet spaces. By Lemma 
\ref{lem:1.12} the algebra \eqref{eq:1.20} is just $B$. 

Next we check all the conditions for \cite[Proposition 4.5]{Tay}.
The exactness of $(C_*,d_*)$ entails that im$(d_{n+1}) = \ker (d_n)$ is a closed subspace 
of  $C_n$, and in particular it is also Fr\'echet. The open mapping theorem for Fr\'echet spaces 
says that $d_n : C_n \to \mr{im}(d_n)$ is open, which in the terminology of \cite[\S 4]{Tay} means
that it is a topological homomorphism. By Lemmas \ref{lem:1.12} and \ref{lem:1.8},
\[
N \cong C^\infty (V)^G \underset{\mc O (\tilde V)^G}{\hat \otimes} M 
\underset{\mc O (\tilde V)^G}{\hat \otimes} C^\infty (V)^G 
\]
is a Fr\'echet space. Further, by Lemma \ref{lem:1.8} for $\mc O (\tilde V)^G \otimes 
\mc O (\tilde V)^G$, $B$ and all the $C_n$ are nuclear Fr\'echet spaces. Now we can apply 
\cite[Proposition 4.5]{Tay}, which says that $H_n (B, N)$ can be computed as 
\begin{equation}\label{eq:1.17}
H_n \big( B \hat \otimes B^{op} \otimes F_* \underset{B \hat \otimes B^{op}}{\otimes} N , 
d_* \otimes \mr{id} \big) = H_n (F_* \otimes N , d_*) .
\end{equation}
By the exactness of $C^\infty (V)^G \otimes_{\mc O (\tilde V)^G}$ from Theorem \ref{thm:1.1}, 
there are natural isomorphisms of $C^\infty (V)^G$-modules
\[
H_n (F_* \otimes N,d_*) \cong H_n \big( F_* \otimes C^\infty (V)^G 
\underset{\mc O (\tilde V)^G}{\otimes} M , d_* \big) \cong C^\infty (V)^G 
\underset{\mc O (\tilde V)^G}{\otimes} H_n (F_* \otimes M ,d_*) .
\]
Combine that with \eqref{eq:1.16} and \eqref{eq:1.17}. The resulting isomorphism shows that
$H_n ( B,N )$ is Hausdorff. In its construction as
\[
H_n (F_* \otimes N, d_*) = \ker (d_n) / \mr{im}(d_{n+1}) ,
\]
ker$(d_n)$ is closed by the continuity of $d_n$. By Hausdorffness, the image of $d_{n+1}$ must
be closed as well, which implies that the quotient $H_n (F_* \otimes N, d_*)$ is Fr\'echet.
\end{proof}

\section{Modules consisting of differential forms}

We preserve the setting of the previous paragraph.
To make good use of Theorem \ref{thm:1.10} we will make both 
\[
C^\infty (V)^G \underset{\mc O (\tilde V)^G}{\otimes} HH_n (A) \qquad \text{and} \qquad 
HH_n \Big( C^\infty (V)^G \underset{\mc O (\tilde V)^G}{\otimes} A \Big) 
\]
more explicit in some relevant classes of examples. As we are dealing with algebraic
tensor products, this involves checking that some modules are finitely generated. There have
been ample investigations of the structure of $C^\infty (V)^G$, starting with \cite{Sch}.
On the other hand, $C^\infty (V)$ has hardly been studied as $C^\infty (V)^G$-module.

Let $\pi$ be a representation of $G$ on a finite dimensional real vector 
space $W$. By classical results of Noether, see for instance \cite[\S 13.3]{Eis}, the ring 
of real valued polynomial functions $S(W^*)$ on $W$ is finitely generated as module over $S(W^*)^G$.

\begin{thm}\label{thm:1.5}
Let $G$ be a finite group.
\enuma{
\item $C^\infty (W)$ is generated as $C^\infty (W)^G$-module by a finite subset of $S(W^*)$.
\item Let $V$ be a smooth manifold with a smooth $G$-action. 
Then $C^\infty (V)$ is finitely generated as $C^\infty (V)^G$-module.
}
\end{thm}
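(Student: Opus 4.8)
The plan is to derive (b) from (a) by a local argument at each orbit, and to prove (a) via a uniform local generation statement together with an invariant partition of unity; I work with complex valued functions throughout. Fix algebra generators $\sigma_1,\dots,\sigma_m$ of $S(W^*)^G$, so that the orbit map $\sigma=(\sigma_1,\dots,\sigma_m)\colon W\to\R^m$ is $G$-invariant and separates $G$-orbits, and fix Noether generators $p_1=1,p_2,\dots,p_k\in S(W^*)$ of $S(W^*)$ over $S(W^*)^G$, regarded inside $\C[W]=\C\otimes_\R S(W^*)$. The heart of the proof is the local claim $(\star)$: for every $v\in W$, the germs of $p_1,\dots,p_k$ along the finite orbit $Gv$ generate the germs of $C^\infty$ functions along $Gv$ as a module over $\sigma^*\big(C^\infty_{\sigma(v)}(\R^m)\big)$, hence a fortiori over the germs of $C^\infty(W)^G$.

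To prove $(\star)$ I would apply Malgrange's preparation (finiteness) theorem \cite{Tou} to the germ of $\sigma$ at its finite fibre $\sigma^{-1}(\sigma(v))=Gv$: it gives that the module of germs of $C^\infty$ functions along $Gv$ is finitely generated over $\sigma^*\big(C^\infty_{\sigma(v)}(\R^m)\big)$, generated by lifts of any $\C$-spanning set of its quotient by the ideal generated by $\sigma_1-\sigma_1(v),\dots,\sigma_m-\sigma_m(v)$; and that quotient is isomorphic to $\C[W]\big/(\sigma_1-\sigma_1(v),\dots,\sigma_m-\sigma_m(v))\C[W]$. The identification of the $C^\infty$ quotient with this algebraic one rests on the fact that $\sigma_1-\sigma_1(v),\dots,\sigma_m-\sigma_m(v)$ are real-analytic with $v$ as an isolated common zero, so that by the {\L}ojasiewicz inequality and Tougeron's results on flat functions \cite{Tou} the ideal they generate in $C^\infty_v(W)$ already contains the flat germs at $v$. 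Now $\C[W]\big/(\sigma_1-\sigma_1(v),\dots,\sigma_m-\sigma_m(v))\C[W]$ is Artinian, hence finite dimensional, and the image of $\C[W]^G$ in it is just $\C$, because the $\sigma_j$ generate the algebra $\C[W]^G$; therefore this quotient equals $\sum_i\C[W]^G\,\overline{p_i}=\sum_i\C\,\overline{p_i}$, so the images of $p_1,\dots,p_k$ span it, and $(\star)$ follows.

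Given $(\star)$, let $f\in C^\infty(W)$. Each orbit has a $G$-invariant neighbourhood on which $f=\sum_i(b_i\circ\sigma)\,p_i$ for suitable $C^\infty$ germs $b_i$ near $\sigma(v)$; choosing such a $G$-invariant open cover $\{\Omega_\alpha\}$ of $W$ with corresponding $b_i^{(\alpha)}$, a $G$-invariant partition of unity $\{\chi_\alpha\}$ subordinate to it (average an ordinary one over $G$), and $g_i:=\sum_\alpha\chi_\alpha\,(b_i^{(\alpha)}\circ\sigma)\in C^\infty(W)^G$, one gets $\sum_i g_ip_i=\sum_\alpha\chi_\alpha f=f$; this proves (a). For (b), cover the compact $V$ by finitely many $G$-invariant tubular neighbourhoods $\Omega_1,\dots,\Omega_r$, each $G$-equivariantly diffeomorphic to $G\times_{G_{v_j}}W_j$ for a linear slice representation $W_j$ of the finite isotropy group $G_{v_j}$ (Bochner linearization / slice theorem). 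Then $C^\infty(\Omega_j)^G\cong C^\infty(W_j)^{G_{v_j}}$, and $C^\infty(\Omega_j)$, being assembled from the finite-dimensional $C^\infty(G)$ and from $C^\infty(W_j)$, is finitely generated over $C^\infty(\Omega_j)^G$ by part (a) applied to $(W_j,G_{v_j})$. Extending the finitely many local generators to $V$ by $G$-invariant cut-offs and summing against a $G$-invariant partition of unity subordinate to $\{\Omega_j\}$ then shows that $C^\infty(V)$ is finitely generated over $C^\infty(V)^G$.

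I expect the main obstacle to be $(\star)$, and specifically the verification of the finiteness hypothesis in Malgrange's preparation theorem for the orbit map: one must handle the preparation theorem along the (finite but in general disconnected) fibre $Gv$ and identify $C^\infty_v(W)/(\sigma_1-\sigma_1(v),\dots,\sigma_m-\sigma_m(v))C^\infty_v(W)$ with the finite-dimensional coinvariant-type algebra $\C[W]/(\sigma_j-\sigma_j(v))\C[W]$, which is exactly where the {\L}ojasiewicz inequality and the flat-function machinery of \cite{Tou} enter. The spanning computation, the two partition-of-unity globalizations, and the slice-theorem reduction for (b) should all be routine once $(\star)$ is in hand.
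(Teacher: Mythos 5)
Your proof of part (b) follows essentially the same route as the paper: reduce to the linear case via exponential maps / slices around each orbit, use a $G$-invariant partition of unity, and apply part (a) to the slice representation at each isotropy group $G_{v_j}$. The paper phrases this via $G_v$-equivariant Riemannian exponential maps and $G_v$-stable balls $B_v$ rather than via the full slice-theorem formulation $G\times_{G_{v_j}}W_j$, but the two are the same argument: in both cases the key points are that on a small enough tube around $Gv$ everything reduces to $C^\infty$ functions near $0$ in $T_vV$ with the $G_v$-action, and that the invariant partition of unity plus invariant cut-offs let you glue local generating sets. Your remark that $C^\infty(\Omega_j)\cong C^\infty(G\times W_j)^{G_{v_j}}$ is a direct summand of the free module $\C^G\otimes C^\infty(W_j)$, hence finitely generated over $C^\infty(W_j)^{G_{v_j}}$ by (a), is the right bookkeeping; the paper achieves the same by requiring $G_p\subset G_v$ on $\exp_v(B_v)$.

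For part (a) you take a genuinely different path. The paper does not reprove the linear equivariant preparation result at all: it cites Po\'enaru's Lemme III.1.4.1 (that $C^\infty_G(W,W')$ is a finitely generated $C^\infty(W)^G$-module, with polynomial generators), and the only new ingredient is the observation that, with $W'=\C[G]$ the regular representation, the map
\[
f\;\longmapsto\;\Big[w\mapsto \sum_{g\in G}f(\pi(g^{-1})w)\,g\Big]
\]
is a $C^\infty(W)^G$-module isomorphism $C^\infty(W)\to C^\infty_G(W,\C[G])$, so the finiteness transfers to $C^\infty(W)$. You instead build the statement from scratch: Noether generators $\sigma_1,\dots,\sigma_m$ of $S(W^*)^G$ and $p_1,\dots,p_k$ of $S(W^*)$ over it, Malgrange's finiteness/preparation theorem applied to the orbit map $\sigma$ along the fibre $Gv$, {\L}ojasiewicz plus Tougeron's flat-function machinery to identify the $C^\infty$ coinvariant quotient with the finite-dimensional algebraic one $\C[W]/(\sigma_j-\sigma_j(v))$, and then an invariant partition of unity. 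This is, in effect, a reconstruction of the heart of Po\'enaru's proof in the special case $W'=\C$. What your route buys is self-containedness and an explicit finite generating set $\{p_1,\dots,p_k\}$; what the paper's route buys is brevity and the observation (used later in the paper, e.g. for differential forms) that the generators can be taken from $S(W^*)$ because Po\'enaru's argument already produces polynomial generators. Two details in your version deserve a sentence when written out in full: the preparation theorem must indeed be applied along the whole fibre $\sigma^{-1}(\sigma(v))=Gv$, but since germs along a finite set are a finite product and $G$ permutes the points transitively, the single-point statement and the $G$-equivariance of $p_1,\dots,p_k$ give what you need; and the identification of the $C^\infty$ quotient with the algebraic one uses that the $\sigma_j-\sigma_j(v)$ are real-analytic with $Gv$ as common zero set, so that the flat germs lie in the ideal they generate, exactly as you indicate via {\L}ojasiewicz. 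With those filled in, your argument is correct.
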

\begin{proof}
(a) This is contained in \cite[Lemme III.1.4.1]{Poe}, but in disguise. Namely, it is stated there
that, for any finite dimensional real $G$-representation $(\pi',W')$,
\[
C^\infty_G (W,W') = \{ f \in C^\infty (W,W') : f (\pi (g) w) = \pi' (g) f(w) \,
\forall g \in G, w \in W \}
\]
is a finitely generated $C^\infty (W)^G$-module. We claim that, for $W' = \C [G]$ the left regular 
representation, there is an isomorphism of $C^\infty (W)^G$-modules
\begin{equation}\label{eq:1.6}
\begin{array}{ccc}
C^\infty (W) & \longleftrightarrow & C_G^\infty (W, \C [G]) , \\
f & \mapsto & [w \mapsto \sum_{g \in G} f (\pi (g^{-1}) w) g ]\\
\phi_1 & \text{\reflectbox{$\mapsto$}} & \phi = \sum_{g \in G} \phi_g g 
\end{array}.
\end{equation}
Indeed, the equivariance condition $\phi (\pi (g)w) = g \phi (w)$ means precisely that 
\[
\phi_g (w) = \phi_1 (\pi (g^{-1}) w) \text{ for all } w \in W. 
\]
Hence the two maps in \eqref{eq:1.6} are mutually inverse.
The proof of \cite[Lemme III.1.4.1]{Poe} uses only polynomial functions on $W \otimes W'^{*}$ as
generators, so via the isomorphism \eqref{eq:1.6} we can conclude that $C^\infty (W)$ is generated
by a finite subset of $S(W^*)$. In fact any set that generates $S(W^*)$ as $S(W^*)^G$-module will do.\\
(b) By \cite[Theorem 6.1]{Mos}, $V$ can be embedded $G$-equivariantly as a closed sub\-ma\-nifold 
in a space $W$ as in part (a). Thus we may and do regard $V$ as a subspace of $W$.
With part (a) we choose a finite set of generators $\{ f_i \}_i$ for $C^\infty (W)$ as 
$C^\infty (W)^G$-module. According to \cite[Th\'eor\`eme IX.4.3]{Tou}, the restriction map 
\[
C^\infty (W) \to C^\infty (V) : f \mapsto f |_V 
\]
is surjective. Hence the functions $f_i |_V$ generate $C^\infty (V)$ as $C^\infty (V)^G$-module. 
\end{proof}

In the algebraic setting, a theorem of Serre says that $\Omega^n (\tilde V)$ is
finitely generated as $\mc O (\tilde V)$-module, and hence also as $\mc O (\tilde V)^G$-module.
Similarly, the smooth Serre--Swan theorem says that $\Omega^n_{sm}(V)$ is finitely generated as 
$C^\infty (V)$-module, for any $n \in \Z_{\geq 0}$. This holds for any smooth manifold $V$,
compact or not \cite{Mor}.
By Theorem \ref{thm:1.5} $\Omega^n_{sm}(V)$ also finitely generated as $C^\infty (V)^G$-module.

In view of the Hochschild--Kostant--Rosenberg theorem \cite[Theorem 3.4.4]{Lod}, the Hochschild 
homology of finite type algebras will involve differential forms on varieties related to $\tilde V$. 
We will study such modules in a setting that starts with (i) and (ii) from Conditions \ref{cond}.
We assume that an embedding $\imath : \tilde Y_1 \to \tilde V$ is given, such that
\begin{itemize}
\item the image of $\imath$ is closed in $\tilde V$ and $\imath : \tilde Y_1 \to 
\imath(\tilde Y_1)$ is an isomorphism of affine algebraic varieties,
\item $Y_1 := \imath^{-1} (V)$ is a real analytic Zariski-dense submanifold of $\tilde Y_1$ and 
$\imath|_{Y_1} : Y_1 \to \imath (Y_1)$ is a diffeomorphism.
\end{itemize}
Thus $\imath$ induces algebra homomorphisms 
\[
\imath^* : C^\infty (V) \to C^\infty (Y_1) \quad \text{and} \quad
\imath^* : \mc O (\tilde V) \to \mc O (\tilde Y_1).
\]
Let $\tilde Y$ be a finite disjoint union of complex affine varieties $\tilde Y_j \; (j \in J)$, 
not necessarily of the same dimension, each of which has the same properties as those of 
$\tilde Y_1$ just listed. Let $Y$ be the disjoint union of the $Y_j$.

The above setup is used to study Schwartz algebras of reductive $p$-adic groups \cite[\S 3.1]{Sol2}.
However, let us point out that the standard and most instructive case of the upcoming results is
simply $\tilde Y = \tilde V, Y = V$.

\begin{lem}\label{lem:1.4}
With the above assumptions, let $C^\infty (V)^G$ act on $\Omega^n_{sm} (Y)$ via $\imath^*$. 
\enuma{
\item $\Omega^n (\tilde Y)$ is finitely generated as $\mc O (\tilde V )^G$-module.
\item $\Omega^n_{sm} (Y)$ is generated as $C^\infty (V)^G$-module by a finite subset of 
$\Omega^n (\tilde Y)$. 
}
\end{lem}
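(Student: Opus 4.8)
The plan is to prove Lemma \ref{lem:1.4} in two steps, treating (a) as an algebraic statement about Noetherian rings and (b) as a patching argument on the compact manifold $V$, in the spirit of the proof of Theorem \ref{thm:1.5}(b).

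For part (a), first I would reduce to a single component, since $\Omega^n(\tilde Y) = \bigoplus_{j \in J} \Omega^n(\tilde Y_j)$ and $J$ is finite. On a single component, the embedding $\imath : \tilde Y_1 \to \tilde V$ realizes $\tilde Y_1$ as a closed subvariety, so $\mc O(\tilde Y_1)$ is a finitely generated $\mc O(\tilde V)$-module (it is a quotient of $\mc O(\tilde V)$ by the ideal of $\imath(\tilde Y_1)$). Since $\tilde Y_1$ is an affine variety, $\Omega^n(\tilde Y_1) = \bigwedge^n_{\mc O(\tilde Y_1)} \Omega^1(\tilde Y_1)$, and $\Omega^1(\tilde Y_1)$ is a finitely generated $\mc O(\tilde Y_1)$-module (it is generated by the differentials of a finite set of coordinate functions coming from an embedding of $\tilde Y_1$ in affine space). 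Hence $\Omega^n(\tilde Y_1)$ is finitely generated over $\mc O(\tilde Y_1)$, therefore over $\mc O(\tilde V)$. Finally, $\mc O(\tilde V)$ is itself finitely generated as a module over $\mc O(\tilde V)^G$ by the Noether finiteness theorem cited before Theorem \ref{thm:1.5} (applied to $\tilde V$ embedded $G$-equivariantly in a finite-dimensional $G$-representation, or directly to the coordinate ring); composing, $\Omega^n(\tilde Y)$ is finitely generated over $\mc O(\tilde V)^G$.

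For part (b), I would mimic the proof of Theorem \ref{thm:1.5}(b). Since $V$ is compact and $\imath : Y \to \imath(Y) \subset V$ is a diffeomorphism onto a closed submanifold, $\imath(Y)$ is compact, hence $Y$ is compact. Endow $V$ with a $G$-invariant Riemannian metric, and cover $V$ by finitely many $G$-stable exponential charts $\exp_v(B_v^\circ)$ as in that proof. Using a $G$-invariant partition of unity subordinate to this cover, it suffices to show that, locally over each chart $\exp_v(B_v)$, smooth $n$-forms on $Y$ are generated over $C^\infty(B_v)^{G_v}$ by (restrictions of) algebraic forms. Over such a chart, $Y \cap \exp_v(B_v)$ is cut out inside the coordinate ball by the equations defining $\imath(\tilde Y) \cap \tilde V$, and $\Omega^n_{sm}$ of the piece of $Y$ is a finitely generated module over $C^\infty$ of the piece of $V$ (as $Y$ is a closed submanifold, with finitely generated conormal data), which by Theorem \ref{thm:1.5}(a) has finite rank over the $G_v$-invariants. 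Tracking generators through these reductions and observing that the algebraic forms $\Omega^n(\tilde Y)$ already surject onto each local $\Omega^n_{sm}$ after multiplying by the partition of unity and the Noether generators of $\mc O(\tilde V)$ over $\mc O(\tilde V)^G$, one gets a finite generating set inside $\Omega^n(\tilde Y)$. Combining with part (a): $\Omega^n(\tilde Y)$ is finitely generated over $\mc O(\tilde V)^G$, and its image in $\Omega^n_{sm}(Y)$ generates as a $C^\infty(V)^G$-module.

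The main obstacle I anticipate is the bookkeeping in part (b): one must make sure that passing from the global forms $\Omega^n(\tilde Y)$ through the local exponential charts (which are smooth but not algebraic) does not lose the algebraicity of the generators. The point to be careful about is that the partition-of-unity functions and the chart diffeomorphisms are only smooth, so the argument cannot simply be ``generate locally, then patch''; instead one shows that a fixed finite set of algebraic forms on $\tilde Y$ already spans each local module $\Omega^n_{sm}(Y \cap \exp_v(B_v))$ over $C^\infty(B_v)^{G_v}$, which is where part (a) together with Theorem \ref{thm:1.5} does the real work, and then the $G$-invariant partition of unity merely assembles these local statements into a global one without introducing new generators. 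A secondary technical point is verifying that $\Omega^n_{sm}$ of a closed submanifold is finitely generated over the ambient $C^\infty$-algebra via $\imath^*$, which follows from the existence of a tubular neighborhood and the finite generation of the conormal bundle's sections, but should be stated carefully.
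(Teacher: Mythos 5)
Your argument for part (a) follows the same outline as the paper: restrict from $\tilde V$ to the closed subvariety $\tilde Y$, observe $\Omega^n(\tilde Y)$ is finitely generated over $\mc O(\tilde V)$, then pass down to $\mc O(\tilde V)^G$ using the Noether finiteness of $\mc O(\tilde V)$ over $\mc O(\tilde V)^G$ (the paper cites \cite[Proposition 13.14]{Eis} via integral closure; your invocation of the Noether theorem is equivalent). This part is fine.

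For part (b) you take a genuinely different route, and the obstacle you flag at the end is, I think, fatal to the plan as written rather than merely a bookkeeping nuisance. Concretely: over an exponential chart $\exp_v(B_v)$, Theorem \ref{thm:1.5}(a) produces generators of $C^\infty(B_v)$ over $C^\infty(B_v)^{G_v}$ that are \emph{polynomials in the linear coordinates on $T_v V$}, transported through $\exp_v$. These functions are not elements of $\mc O(\tilde V)$, and when you multiply a fixed set of algebraic forms from $\Omega^n(\tilde Y)$ by them you leave $\Omega^n(\tilde Y)$; the resulting local generating set lives in $\Omega^n_{sm}(Y)$ but not in the image of $\Omega^n(\tilde Y)$. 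Multiplying further by a partition of unity (also non-algebraic) does not repair this. So the sentence ``observing that the algebraic forms $\Omega^n(\tilde Y)$ already surject onto each local $\Omega^n_{sm}$ after multiplying by the partition of unity and the Noether generators'' is precisely the claim that needs a proof, and it does not come for free from Theorem \ref{thm:1.5}(a). The paper avoids this difficulty entirely by working \emph{globally}: it uses \cite[Th\'eor\`eme 4.3]{Tou} (Whitney--Tougeron extension) to get the surjection $\imath^*: C^\infty(V) \twoheadrightarrow C^\infty(Y)$, notes that inside $\Omega^n_{sm}(Y)$ the three actions $C^\infty(V)^G \otimes \Omega^n(\tilde Y)$, $C^\infty(V)^G \otimes \mc O(\tilde Y) \otimes \Omega^n(\tilde Y)$ and $C^\infty(V)^G \otimes \mc O(\tilde V) \otimes \Omega^n(\tilde Y)$ all have the same image (because $\mc O(\tilde Y) \cdot \Omega^n(\tilde Y) = \Omega^n(\tilde Y)$ and $\mc O(\tilde V) \twoheadrightarrow \mc O(\tilde Y)$), and then combines Theorem \ref{thm:1.5}(b) with the surjectivity of $\imath^*$ to identify that image with $C^\infty(Y)\cdot\Omega^n(\tilde Y) = \Omega^n_{sm}(Y)$. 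Finite generation by a subset of $\Omega^n(\tilde Y)$ is then immediate from part (a). The key conceptual difference is that the paper never chooses local coordinates and so never has to show that chart-dependent generators can be replaced by algebraic ones; it instead absorbs $\mc O(\tilde Y)$ into $\Omega^n(\tilde Y)$ at the module level. I would encourage you to adopt that global tensor-product argument, or else supply a genuine proof of the local algebraicity claim, which as stated is a gap.
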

\begin{proof}
(a) By assumption $\imath( \tilde Y)$ is closed in $\tilde V$, so the restriction map
$\mc O (\tilde V) \to \mc O (\imath(\tilde Y))$ is surjective. As $\imath\big|_{\tilde Y}$ is
an isomorphism $\imath^* : \mc O (\tilde V) \to \mc O (\tilde Y)$ is surjective.
In particular $\Omega^n (\tilde Y)$ is a finitely generated module, over $\mc O (\tilde V)$
as well as over $\mc O (\tilde Y)$. Since 
$\mc O (\tilde V)$ is the integral closure of $\mc O (\tilde V)^G$ in the quotient field
of $\mc O (\tilde V)$, it has finite rank over $\mc O (\tilde V)^G$ \cite[Proposition 13.14]{Eis}.
Hence $\Omega^n (\tilde Y)$ is also finitely generated as $\mc O (\tilde V)^G$-module. \\
(b) By the smooth Serre--Swan theorem, $\Omega^n_{sm} (Y_j)$ is finitely generated over 
$C^\infty (Y_j)$. As $\imath (Y_j)$ is a closed submanifold of $V$, the restriction map 
$C^\infty (V) \to C^\infty (\imath (Y_j))$ is surjective \cite[Th\'eor\`eme IX.4.3]{Tou}. 
Since $\imath \big|_{Y_j}$ is a diffeomorphism, also
\begin{equation}\label{eq:1.5}
\imath^* : C^\infty (V) \to C^\infty (Y_j) \text{ is surjective.} 
\end{equation} 
In particular $\Omega^n_{sm} (Y_j)$ is a finitely generated $C^\infty (V)$-module, and so is
$\Omega^n_{sm} (Y) = \bigoplus_{j \in J} \Omega^n (Y_j)$. 
From the definition of the module structures we see that the tensor products 
\[
C^\infty (V)^G \otimes \Omega^n (\tilde Y) ,\quad C^\infty (V)^G \otimes \mc O (\tilde Y) \otimes
\Omega^n (\tilde Y) ,\quad  C^\infty (V)^G \otimes \mc O (\tilde V) \otimes \Omega^n (\tilde Y) .
\]
have the same image in $\Omega^n_{sm}(Y)$, under the natural action maps. By Theorem \ref{thm:1.5}.b
and \eqref{eq:1.5} the last one has the same image as
\[
C^\infty (V) \otimes \Omega^n (\tilde Y) \quad \text{and} \quad
C^\infty (Y) \otimes \Omega^n (\tilde Y) .
\]
The latter equals $\Omega^n_{sm}(Y)$, so $\Omega^n (\tilde Y)$ generates $\Omega^n_{sm} (Y)$ as
$C^\infty (V)^G$-module. By part (a) that can be achieved with a finite subset of 
$\Omega^n (\tilde Y)$.  
\end{proof}

Consider a $\mc O (\tilde V)^G$-submodule $M$ of $\Omega^n (\tilde Y)$, where the
action goes via $\imath^*$. Although it might seem obvious that 
$C^\infty (V)^G \underset{\mc O (\tilde V)^G}{\otimes} M$
embeds in $\Omega^n_{sm}(Y)$, that is actually about as difficult as Theorem \ref{thm:1.1}.

\begin{prop}\label{prop:1.6}
Assume that (i), (ii) and (iii) from Conditions \ref{cond} hold and let $M, Y$ and 
$\tilde Y$ be as above. The natural homomorphism of Fr\'echet $C^\infty (V)^G$-modules
$C^\infty (V)^G \underset{\mc O (\tilde V)^G}{\otimes} M \to \Omega^n_{sm}(Y)$
is injective.
\end{prop}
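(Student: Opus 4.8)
The plan is to reduce the injectivity claim to Lemma~\ref{lem:1.11} together with the exactness of the formal completion functor, mimicking the structure of the proof of Theorem~\ref{thm:1.1}. First I would set up the comparison of formal completions. Fix $v \in V$ and let $y_1,\dots,y_k$ be the points of $Y$ lying over the $G$-orbit $Gv$ (one in each component $Y_j$ that meets $\imath^{-1}(Gv)$); the completion of $\Omega^n_{sm}(Y)$ at these points is a product of the $FP_{y_j}$-modules $\widehat{\Omega^n_{sm}(Y_j)}_{y_j}$, and similarly on the algebraic side the completion of $\Omega^n(\tilde Y)$ decomposes over the $y_j$. Since $\imath$ is an isomorphism onto a closed subvariety (resp.\ closed submanifold), the surjections $\imath^*\colon \mc O(\tilde V)\to\mc O(\tilde Y)$ and $\imath^*\colon C^\infty(V)\to C^\infty(Y_j)$ identify the relevant completed local rings: $\widetilde{FP}_v^{G_v}$ surjects onto $\widetilde{FP}_{y_j}$ and $FP_v^{G_v}$ surjects onto $FP_{y_j}$, compatibly. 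Under Condition~(iii), $FP_v\cong\widetilde{FP}_v$ as $G_v$-representations, and this persists after passing to the quotients cut out by $\imath$.

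Next I would establish the key injectivity of formal completions, namely that
\[
\Big( C^\infty (V)^G \underset{\mc O (\tilde V)^G}{\otimes} M \Big)^\wedge_{Gv} \longrightarrow
\Big( C^\infty (V)^G \underset{\mc O (\tilde V)^G}{\otimes} \Omega^n(\tilde Y) \Big)^\wedge_{Gv}
\]
is injective for every $v$. This is exactly an instance of Lemma~\ref{lem:1.11} applied to the inclusion $M \subset \Omega^n(\tilde Y)$ of $\mc O(\tilde V)^G$-modules, using formula~\eqref{eq:1.12} and the flatness/identification of $FP_v^{G_v}$ over $\widetilde{FP}_v^{G_v}$ under~(iii). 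Combined with the identification of completions from the previous paragraph (via $\imath^*$), this shows that the composite $\big( C^\infty(V)^G\otimes_{\mc O(\tilde V)^G} M\big)^\wedge_{Gv}\to\widehat{\Omega^n_{sm}(Y)}_{Gv}$ — which factors the completion of the map in the statement — is injective, since it equals the injective map of Lemma~\ref{lem:1.11} followed by the isomorphism of completions induced by $\imath^*$ on the module $\Omega^n(\tilde Y)$. Note that here one uses Lemma~\ref{lem:1.4}(a) to know $\Omega^n(\tilde Y)$ is finitely generated over $\mc O(\tilde V)^G$, so all the tensored modules are Fréchet by Lemma~\ref{lem:1.8}.

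Finally I would run the argument of Theorem~\ref{thm:1.1} verbatim to pass from injectivity of formal completions to injectivity of the map itself. Let $\nu$ denote the map in the statement and let $K = \ker(\nu)$, a closed $C^\infty(V)^G$-submodule of the Fréchet module $C^\infty(V)^G\otimes_{\mc O(\tilde V)^G} M$ (closed because $\nu$ is continuous and $C^\infty(V)^G$-linear, and the target $\Omega^n_{sm}(Y)$ is Fréchet, hence Hausdorff). By Lemma~\ref{lem:1.3} it suffices to show $\widehat{K}_{Gv}=0$ for all $v\in V$. Given $m\in K$ representing a class in $\widehat{K}_{Gv}$: since $\widehat{\nu}_{Gv}$ is injective, $m$ lies in $\overline{I_{Gv}^\infty\big(C^\infty(V)^G\otimes_{\mc O(\tilde V)^G} M\big)}$, which by Lemma~\ref{lem:1.8} is already closed, so $m=\sum_j f_j\otimes m_j$ with $f_j\in I_{Gv}^\infty$, $m_j\in M$; then the Meyer--Tougeron divisibility trick from \cite[pp.~183--184]{MeTo} (find $\psi\in I_{Gv}^\infty$ with $f_j/\psi\in I_v^\infty$ and a decomposition $\psi=\sum_i\epsilon_i$ with $\epsilon_i/\psi\in I_v^\infty$, averaging over $G$ throughout) writes $m=\sum_i(\epsilon_i/\psi)\cdot m\in\overline{I_{Gv}^\infty K}$, forcing $m=0$ in $\widehat{K}_{Gv}$. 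Hence $K=0$. The main obstacle is the first step: correctly identifying the formal completions along $\imath$ so that Lemma~\ref{lem:1.11} genuinely applies — one must check that the surjectivity of $\imath^*$ on both the algebraic and smooth local rings is compatible with the $G_v$-action and with the isomorphism $FP_v\cong\widetilde{FP}_v$ furnished by~(iii); everything downstream is a mechanical repetition of the proof of Theorem~\ref{thm:1.1}.
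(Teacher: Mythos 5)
Your plan is essentially the paper's own proof: show that the formal completion of the map vanishes on the kernel at every $Gv$, and then promote this to vanishing of the kernel via Lemma~\ref{lem:1.3} and the Merrien--Tougeron construction, exactly as in the proof of Theorem~\ref{thm:1.1}. The only organizational difference is that the paper first uses Theorem~\ref{thm:1.1} to reduce to $M = \Omega^n(\tilde Y)$ and then directly analyzes the formal completions of \eqref{eq:1.7}, whereas you factor at the level of completions and invoke Lemma~\ref{lem:1.11} on $M \subset \Omega^n(\tilde Y)$; these two routes are equivalent.

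Two remarks on precision. First, you assert that
\[
\big( C^\infty (V)^G \underset{\mc O (\tilde V)^G}{\otimes} \Omega^n (\tilde Y) \big)^\wedge_{Gv} \longrightarrow \widehat{\Omega^n_{sm} (Y)}_{Gv}
\]
is an isomorphism; the paper only establishes (and only needs) that this map is injective, which it verifies by writing both completions explicitly as $\bigoplus_{y \in \imath^{-1}(Gv)} \widehat{\mc O(\tilde Y_j)}_y \otimes_\C \bigwedge^n T_y(\tilde Y_j)^*$ respectively $\bigoplus_{y} \widehat{C^\infty(Y_j)}_y \otimes_\R \bigwedge^n T_y(Y_j)^*$ and comparing. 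Second, the substantive step you flag but do not carry out is precisely the one the paper spends its effort on: showing that condition~(iii) for $(V, \tilde V)$, together with the construction of $Y_j$, yields the pointwise identification $T_y(\tilde Y_j) = T_y(Y_j) \otimes_\R \C$, which is what makes the map on completions injective. Your phrase ``this persists after passing to the quotients cut out by $\imath$'' needs this verification to be a proof rather than a hope. With that filled in, your argument matches the paper's.
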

\begin{proof}
By Theorem \ref{thm:1.1} the natural map
\[
C^\infty (V)^G \underset{\mc O (\tilde V)^G}{\otimes} M \to
C^\infty (V)^G \underset{\mc O (\tilde V)^G}{\otimes} \Omega^n (\tilde Y)
\]
is injective. Therefore we may assume that $M = \Omega^n (\tilde Y)$. Then the statement
factors naturally as a direct sum indexed by $j \in J$. It suffices to consider one such direct
summand, say
\begin{equation}\label{eq:1.7}
C^\infty (V)^G \underset{\mc O (\tilde V)^G}{\otimes} \Omega^n (\tilde Y_1) \to
\Omega^n_{sm} (Y_1) .
\end{equation}
The formal completion of $\Omega^n_{sm} (Y_1)$ as $C^\infty (V)^G$-module at $Gv \in V /G$ is
\[
\bigoplus_{y \in \imath^{-1}(Gv)} FP_v^{G_v} \underset{C^\infty (V)^G}{\otimes} 
\widehat{C^\infty (Y_1)}_y \underset{\R}{\otimes} \bigwedge\nolimits^n (T_y (Y_1)^*) = 
\bigoplus_{y \in \imath^{-1}(Gv)} \widehat{C^\infty (Y_1)}_y \underset{\R}{\otimes} 
\bigwedge\nolimits^n (T_y (Y_1)^*).
\]
Using assumption (iii) we can also compute the formal completion of the left hand side of
\eqref{eq:1.7}:
\begin{align*}
\Big( C^\infty (V)^G \underset{\mc O (\tilde V)^G}{\otimes} \Omega^n (\tilde Y_1) 
\Big)^\wedge_{Gv} & = \bigoplus_{y \in \imath^{-1}(Gv)} FP_v^{G_v} 
\underset{\widetilde{FP}_v^{G_v}}{\otimes} \widehat{\mc O (\tilde Y_1)}_y 
\underset{\C}{\otimes} \bigwedge\nolimits^n \big( T_y (\tilde Y_1)^* \big) \\
& = \bigoplus_{y \in \imath^{-1}(Gv)} \widehat{\mc O (\tilde Y_1)}_y 
\underset{\C}{\otimes} \bigwedge\nolimits^n \big( T_y (\tilde Y_1)^* \big) .
\end{align*}
Assumption (iii) and the construction of $Y_1$ imply that 
$T_y (\tilde Y_1) = T_y (Y_1) \otimes_\R \C$. From that and the above we see that the map
\[
\big( C^\infty (V)^G \underset{\mc O (\tilde V)^G}{\otimes} \Omega^n (\tilde Y_1) 
\big)^\wedge_{Gv} \to \widehat{\Omega^n_{sm} (Y_1)}_{Gv} 
\]
induced by \eqref{eq:1.7} is injective. Now the same argument as for $\mu$ in the proof of 
Theorem \ref{thm:1.1} shows that \eqref{eq:1.7} is injective.
\end{proof}

Describing the image of the map from Proposition \ref{prop:1.6} is another issue.
One would like to think of it as some closure of $M$ in $\Omega^n_{sm}(Y)$,
but in general it is not clear whether the image is closed. To overcome that, we 
specialize to submodules of $\Omega^n (\tilde Y)$ that are direct summands.
Let $p$ be an idempotent in the ring of continuous $C^\infty (V)^G$-linear endomorphisms
of $\Omega^n_{sm} (Y)$, such that $p$ stabilizes $\Omega^n (\tilde Y)$. Then 
\begin{equation}\label{eq:1.8}
\Omega^n_{sm} (Y) = p \Omega^n_{sm} (Y) \oplus (1-p) \Omega^n_{sm} (Y) ,
\end{equation}
so $p \Omega^n_{sm} (Y)$ is a closed $C^\infty (V)^G$-submodule of $\Omega^n_{sm} (Y)$. 
Similarly $p \Omega^n (\tilde Y)$ is a $\mc O (\tilde V )^G$-submodule and a
direct summand of $\Omega^n (\tilde Y)$. 

\begin{lem}\label{lem:1.9}
Assume (i), (ii) and (iii) from Conditions \ref{cond}. The natural map
\[
\mu : C^\infty (V)^G \underset{\mc O (\tilde V)^G}{\otimes} p \Omega^n (\tilde Y)
\to p \Omega^n_{sm} (Y) 
\]
is an isomorphism of Fr\'echet $C^\infty (V)^G$-modules.
\end{lem}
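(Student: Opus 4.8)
The plan is to show $\mu$ is both injective and surjective, and that it is an isomorphism of Fréchet spaces via the open mapping theorem. Injectivity is essentially free: the map $\mu$ factors through the injection $C^\infty (V)^G \otimes_{\mc O (\tilde V)^G} \Omega^n (\tilde Y) \hookrightarrow \Omega^n_{sm}(Y)$ of Proposition \ref{prop:1.6}, restricted to the direct summand $p\Omega^n(\tilde Y)$. More precisely, since $p$ is $C^\infty (V)^G$-linear and stabilizes $\Omega^n (\tilde Y)$, applying $C^\infty (V)^G \otimes_{\mc O (\tilde V)^G}$ to the splitting $\Omega^n (\tilde Y) = p\Omega^n(\tilde Y) \oplus (1-p)\Omega^n(\tilde Y)$ identifies $C^\infty (V)^G \otimes_{\mc O (\tilde V)^G} p\Omega^n(\tilde Y)$ with a direct summand of $C^\infty (V)^G \otimes_{\mc O (\tilde V)^G} \Omega^n(\tilde Y)$, compatibly with the map to $\Omega^n_{sm}(Y)$ and its decomposition \eqref{eq:1.8}. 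So injectivity of $\mu$ follows from Proposition \ref{prop:1.6}.

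For surjectivity I would argue as follows. By Lemma \ref{lem:1.4}(b), since $V$ is compact, $\Omega^n_{sm}(Y)$ is generated as a $C^\infty (V)^G$-module by a finite subset of $\Omega^n(\tilde Y)$; equivalently the natural map $C^\infty (V)^G \otimes_{\mc O (\tilde V)^G} \Omega^n(\tilde Y) \to \Omega^n_{sm}(Y)$ is surjective (and by Proposition \ref{prop:1.6} it is in fact an isomorphism). Applying the idempotent $p$ — which is $C^\infty (V)^G$-linear and commutes with this surjection because it stabilizes both $\Omega^n(\tilde Y)$ and $\Omega^n_{sm}(Y)$ — we get that $C^\infty (V)^G \otimes_{\mc O (\tilde V)^G} p\Omega^n(\tilde Y) = p\bigl(C^\infty (V)^G \otimes_{\mc O (\tilde V)^G} \Omega^n(\tilde Y)\bigr)$ surjects onto $p\,\Omega^n_{sm}(Y)$. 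That is exactly surjectivity of $\mu$.

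It remains to upgrade the algebraic isomorphism to a topological one. The source is Fréchet: $p\Omega^n(\tilde Y)$ is a finitely generated $\mc O (\tilde V)^G$-module (being a summand of $\Omega^n(\tilde Y)$, which is finitely generated by Lemma \ref{lem:1.4}(a)), so by Lemma \ref{lem:1.8} the tensor product $C^\infty (V)^G \otimes_{\mc O (\tilde V)^G} p\Omega^n(\tilde Y)$ is Fréchet. The target $p\,\Omega^n_{sm}(Y)$ is a closed subspace of the Fréchet space $\Omega^n_{sm}(Y)$ by \eqref{eq:1.8}, hence Fréchet. The map $\mu$ is continuous and $C^\infty (V)^G$-linear by construction, so by the open mapping theorem the continuous bijection $\mu$ is a homeomorphism, and therefore an isomorphism of Fréchet $C^\infty (V)^G$-modules.

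I expect the main subtlety to lie not in the formal bookkeeping above but in confirming that $p$ genuinely commutes with the natural maps after tensoring — i.e. that the functor $C^\infty (V)^G \otimes_{\mc O (\tilde V)^G}(-)$, applied to the idempotent $p$ restricted to $\Omega^n(\tilde Y)$, yields the restriction of $p$ to $C^\infty (V)^G \otimes_{\mc O (\tilde V)^G} \Omega^n(\tilde Y)$, and that this agrees with $p$ acting on the image $\Omega^n_{sm}(Y)$. This is a diagram chase using that $p$ is $C^\infty (V)^G$-linear (so it is a morphism in $\mr{Mod}_{Fr}(C^\infty (V)^G)$) together with naturality of the comparison map, but one should be careful that no completion is needed anywhere — everything stays in the realm of ordinary (fine-bornology) tensor products, which is exactly what Lemmas \ref{lem:1.8} and \ref{lem:1.4} were set up to guarantee.
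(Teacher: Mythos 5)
Your proof is correct and follows essentially the same route as the paper: injectivity from Proposition \ref{prop:1.6}, surjectivity from Lemma \ref{lem:1.4}(b) combined with the $C^\infty(V)^G$-linear idempotent $p$, and the open mapping theorem to upgrade the continuous bijection to a homeomorphism of Fr\'echet spaces. The ``subtlety'' you flag at the end is handled in the paper simply by the computation $p(\sum_i f_i \omega_i) = \sum_i f_i\, p(\omega_i)$ for $f_i \in C^\infty(V)^G$ and $\omega_i \in \Omega^n(\tilde Y)$, which is just $C^\infty(V)^G$-linearity of $p$; note also that Proposition \ref{prop:1.6} already applies directly to the submodule $M = p\Omega^n(\tilde Y)$, so the explicit direct-summand bookkeeping for injectivity is not needed.
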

\begin{proof}
By construction the image of $\mu$ is contained in $p \Omega^n_{sm} (Y)$ and we know from 
Proposition \ref{prop:1.6} that $\mu$ is injective. By Lemma \ref{lem:1.4}.b any 
$m \in p \Omega^n_{sm} (Y)$ can be written as a finite sum $m = \sum_i f_i \omega_i$ 
with $f_i \in C^\infty (V)^G$ and $\omega_i \in \Omega^n (\tilde Y)$. We compute
\[
m = p(m) = p \big( \sum\nolimits_i f_i \omega_i \big) = \sum\nolimits_i f_i p(\omega_i) 
\in \mu \Big( C^\infty (V)^G \underset{\mc O (\tilde V)^G}{\otimes} p \Omega^n (\tilde Y) \Big) . 
\]
In other words, $\mu$ is surjective. In view of Proposition \ref{prop:1.6}, $\mu$ is
a continuous bijection between Fr\'echet spaces. Now the open mapping theorem says 
that it is a homeomorphism.
\end{proof}

\section{Special cases}
\label{sec:ex}

Consider the algebra $C^\infty (V)$ with $V, \tilde V$ as in Conditions \ref{cond},
for the moment without any group action. By Theorem \ref{thm:1.10}  
\begin{equation}\label{eq:2.5}
HH_n (C^\infty (V)) = HH_n \Big( C^\infty (V) \underset{\mc O (\tilde V)}{\otimes} 
\mc O (\tilde V) \Big) \cong 
C^\infty (V) \underset{\mc O (\tilde V)}{\otimes} HH_n \big( \mc O (\tilde V) \big) .
\end{equation}
Here we may remove the singular locus of $\tilde V$, because it does not meet $V$.
Then $\tilde V$ is nonsingular, so we can invoke the Hochschild--Kostant--Rosenberg theorem 
\cite[Theorem 3.4.4]{Lod}. Next we apply Lemma \ref{lem:1.9} to the right hand side
of \eqref{eq:2.5} and we find natural isomorphisms 
\[
HH_n (C^\infty (V)) \cong C^\infty (V) \underset{\mc O (\tilde V)}{\otimes} 
\Omega^n (\tilde V) \cong \Omega^n_{sm} (V) .
\]
In this way we recover Connes' version of the Hochschild--Kostant--Rosenberg theorem
\cite{Con}, for the Hochschild homology of the Fr\'echet algebra of smooth functions 
on a real analytic manifold $V$. Because of the techniques that we used, our proof only
applies when $V$ can be embedded in a complex affine variety $\tilde V$ such that
$T_v (\tilde V) = T_v (V) \otimes_\R \C$ for all $v \in V$.

Interesting examples arise from imposing conditions in terms of an affine subvariety 
$\tilde W \subset \tilde V$. For instance, let $k \in \N$ and consider the unital 
finite type $\mc O (\tilde V)$-algebra
\[
A = \{ \matje{a}{b}{c}{d} \in M_2 (\C) \otimes \mc O (\tilde V) : 
c \text{ vanishes to the order } k \text{ on } \tilde W \} .
\]
From Lemma \ref{lem:1.9} one can deduce that
\[
C^\infty (V) \underset{\mc O (\tilde V)}{\otimes} A = \{ \matje{a}{b}{c}{d} \in M_2 (\C) 
\otimes C^\infty (V) : c \text{ vanishes to the order } k \text{ on } \tilde W \cap V \} .
\]
In principle $HH_* (A)$ can be computed with the techniques from \cite{KNS}. Thus Theorem 
\ref{thm:1.10} provides an approach to determine 
$HH_* (C^\infty (V) \underset{\mc O (\tilde V)}{\otimes} A)$.

Next we consider the crossed product algebra $\mc O (\tilde V) \rtimes G$, 
where $G$ is a finite group acting on $V$ and on $\tilde V$. Its Hochschild homology has
been determined in \cite[Theorem 2.11]{Nis}:
\begin{equation}\label{eq:2.6}
HH_n (\mc O (\tilde V) \rtimes G) \cong \bigoplus\nolimits_{g \in \langle G \rangle}
\Omega^n (\tilde V^g)^{Z_G (g)} ,
\end{equation}
where $\langle G \rangle$ is a set of representatives for the conjugacy classes in $G$.
Similary, it is known from \cite[Proposition 6]{Bry} that
\begin{equation}\label{eq:2.7}
HH_n (C^\infty (V) \rtimes G) \cong \bigoplus\nolimits_{g \in \langle G \rangle}
\Omega^n_{sm} (V^g)^{Z_G (g)} .
\end{equation}
As $\mc O (\tilde V)$ has finite rank over $\mc O (\tilde V)^G$, $\mc O (\tilde V) \rtimes G$
is a finite type $\mc O (\tilde V)^G$-algebra. By Lemma \ref{lem:1.9}
\[
C^\infty (V)^G \underset{\mc O (\tilde V)^G}{\otimes} \mc O (\tilde V) \rtimes G 
\cong C^\infty (V) \rtimes G .
\]
Now Theorem \ref{thm:1.10} says that 
\begin{equation}\label{eq:2.8}
\begin{aligned}
HH_n (C^\infty (V) \rtimes G) & \cong C^\infty (V)^G \underset{\mc O (\tilde V)^G}{\otimes} 
HH_n (\mc O (\tilde V) \rtimes G) \\
& \cong C^\infty (V)^G \underset{\mc O (\tilde V)^G}{\otimes} 
\bigoplus\nolimits_{g \in \langle G \rangle} \Omega^n (\tilde V^g)^{Z_G (g)} .
\end{aligned}
\end{equation}
By Lemma \ref{lem:1.9}, with $p |_{\Omega^n_{sm} (V^g)}$ the projection to 
$Z_G (g)$-invariants, the right hand side of \eqref{eq:2.8} is isomorphic to
\[
\bigoplus\nolimits_{g \in \langle G \rangle} \Omega^n_{sm} (V^g)^{Z_G (g)} .
\]
Thus our results agree with the earlier findings from \cite{Bry,Nis}.

A more challenging class of examples arises as follows. Suppose that $G$ acts on 
$M_n (\C) \otimes \mc O (\tilde V) = M_n (\mc O (\tilde V))$ by 
\[
g \cdot f = u_g (f \circ g^{-1}) u_g^{-1},
\]
where $u_g \in M_n (\mc O (\tilde V))^\times$ and $f$ is regarded as a map from $\tilde V$
to $M_n (\C)$. Then
\begin{equation}\label{eq:2.9}
A = M_n (\mc O (\tilde V))^G
\end{equation}
is a finite type $\mc O (\tilde V)^G$-algebra. Special cases of this construction are
$\mc O (\tilde V)^G$ (for $n=1$) and $\mc O (\tilde V) \rtimes G$, for $M_n (\C)
= \End (\C [G])$. As far as we are aware, there is no general formula for the Hochschild
homology of such algebras. By Lemma \ref{lem:1.9}
\begin{equation}\label{eq:2.10}
C^\infty (V)^G \underset{\mc O (\tilde V)^G}{\otimes} A \cong
M_n (C^\infty (V))^G .
\end{equation}
Algebras of the form \eqref{eq:2.9} and \eqref{eq:2.10} are relevant because they arise in
abundance from reductive $p$-adic groups, see for instance \cite{Sol2}.

\end{document}